\documentclass[11pt]{article}

\usepackage{amssymb}
\usepackage{amsmath}
\usepackage{mathrsfs}
\usepackage{amsfonts}
\usepackage{vmargin}
\usepackage{amsthm}
\usepackage{graphicx}
\usepackage{authblk}
\usepackage{xfrac}
\usepackage[T1]{fontenc}

\long\def\symbolfootnote[#1]#2{\begingroup%
\def\thefootnote{\fnsymbol{footnote}}\footnote[#1]{#2}\endgroup}



\setmarginsrb{20mm}{20mm}{20mm}{20mm}{10mm}{10mm}{10mm}{10mm}

\newtheoremstyle{remark}
  {}{}{}{}{\bfseries}{.}{.5em}{{\thmname{#1 }}{\thmnumber{#2}}{\thmnote{ (#3)}}}

\RequirePackage{amsthm}

 \frenchspacing
\theoremstyle{definition}
\newtheorem{defi}{Definition}[section]
\newtheorem{rem}[defi]{Remark}
\theoremstyle{plain}

\newtheorem{tw}[defi]{Theorem}

\newtheorem{lem}[defi]{Lemma}

\newtheorem{prop}[defi]{Proposition}

\def\vint{\mathop{\mathchoice%
          {\setbox0\hbox{$\displaystyle\intop$}\kern 0.22\wd0%
           \vcenter{\hrule width 0.6\wd0}\kern -0.82\wd0}%
          {\setbox0\hbox{$\textstyle\intop$}\kern 0.2\wd0%
           \vcenter{\hrule width 0.6\wd0}\kern -0.8\wd0}%
          {\setbox0\hbox{$\scriptstyle\intop$}\kern 0.2\wd0%
           \vcenter{\hrule width 0.6\wd0}\kern -0.8\wd0}%
          {\setbox0\hbox{$\scriptscriptstyle\intop$}\kern 0.2\wd0%
           \vcenter{\hrule width 0.6\wd0}\kern -0.8\wd0}}%
          \mathopen{}\int}
\def\={\hspace{-3mm}&=&\hspace{-3mm}}

\newcommand{\ddt}{\frac{{d}}{dt}}
\newcommand{\iab}{\int_a^b}
\let \epsilon \varepsilon
\let \phi \varphi
\newcommand{\lab}[1]{L^{#1}(a,b)}
\newcommand{\hab}{H^1(a,b)}
\newcommand{\habo}{H^1_0(a,b)}
\newcommand{\iot}{\int_0^T}

\begin{document}
\date{}
\title{\bf Global-in-time regular unique solutions with positive temperature to the 1d thermoelasticity}

\author[1]{Piotr Micha{\l} Bies\footnote{piotr.bies@pw.edu.pl}} \author[2]{Tomasz Cie\'slak\footnote{cieslak@impan.pl}}
\affil[1]{Faculty of Mathematics and Information Sciences, Warsaw University of Technology, Ul. Koszykowa 75, 00-662 Warsaw, Poland}
\affil[2]{Institute of Mathematics, Polish Academy of Sciences, Ul. \'Sniadeckich 8, 00-656 Warsaw, Poland}
\maketitle

\begin{abstract}
We construct a unique regular solution to the minimal nonlinear system of the 1d thermoelasticity. The obtained solution
has a positive temperature. Our approach is based on an estimate, using the Fisher information, which
seems completely new in this context. It is combined with a recent inequality in \cite{CFHS} and embeddings,
which allows us to obtain a new energy estimate. The latter is used in a half-Galerkin procedure yielding
global solutions. The uniqueness and further regularity of such solutions are obtained.
\end{abstract}

\bigskip

\noindent
{\bf Keywords}: Fisher's information, 1D thermoelasticity, regular unique solutions
\medskip

\noindent
\emph{Mathematics Subject Classification (2020): 35M13, 35Q74, 74A15}

\medskip
\section{Introduction}
The present paper is devoted to the study of the one-dimensional minimal hyperbolic-parabolic system of thermoelasticity.
In the next section, we present the physical derivation of the system and discuss assumptions leading to the following system of equations. Our main result is a global-in-time existence of a unique regular solution to the obtained system.

Let us take $a,b\in\mathbb{R}$ such that $a<b$ and $T>0$. We are interested in real-valued functions of time and space $\theta$ and $u$ satisfying
\begin{equation}\label{eq}
\begin{cases}
u_{tt}-u_{xx}=\mu\theta_x,&\textrm{in }(0,T)\times (a,b),\\
\theta_t-\theta_{xx}=\mu\theta u_{tx},&\textrm{in }(0,T)\times (a,b),\\
u(\cdot,a)=u(\cdot,b)=\theta_x(\cdot,a)=\theta_x(\cdot,b)=0,&\textrm{on } (0,T),\\
u(0,\cdot)=u_0,\ u_t(0,\cdot)=v_0,\ \theta(0,\cdot)=\theta_0>0,
\end{cases}
\end{equation}
where $\mu$ is a constant, initial data $\theta_0, u_0, v_0$ are given. The regularity of initial data is discussed later.
Functions $\theta$ and $u$ are interpreted as temperature and displacement of the 1D material (a rod for instance) respectively.
We consider the evolution of a rod undergoing heat displacement.

Notice that for the regular functions $\theta$ and $u$ one has the conservation of energy. Indeed, the following simple proposition holds.
\begin{prop}\label{energy_cons}
Regular solutions $\theta, u$ of \eqref{eq} satisfy
\begin{align}\label{balance}
\frac12\iab u_t^2dx+\frac12\iab u_x^2dx+\iab\theta dx=\frac12\iab v_0^2dx+\frac12\iab u_{0,x}^2dx+\iab\theta_0 dx.
\end{align}
\end{prop}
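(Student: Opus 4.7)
The plan is a standard energy calculation: test the two equations against appropriate multipliers, integrate by parts using the boundary conditions, add the resulting identities so that the cross terms cancel, and finally integrate in time.

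First I would multiply the wave equation $u_{tt}-u_{xx}=\mu\theta_x$ by $u_t$ and integrate over $(a,b)$. The term $\int_a^b u_{tt}u_t\,dx$ yields $\tfrac{d}{dt}\tfrac12\int_a^b u_t^2\,dx$. Integrating by parts in $-\int_a^b u_{xx}u_t\,dx$ produces $\int_a^b u_x u_{tx}\,dx=\tfrac{d}{dt}\tfrac12\int_a^b u_x^2\,dx$, with no boundary contribution because the Dirichlet condition $u(\cdot,a)=u(\cdot,b)=0$ forces $u_t=0$ at the endpoints. This gives
\begin{equation*}
\frac{d}{dt}\Bigl(\frac12\iab u_t^2\,dx+\frac12\iab u_x^2\,dx\Bigr)=\mu\iab\theta_x u_t\,dx.
\end{equation*}

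Next I would simply integrate the heat equation $\theta_t-\theta_{xx}=\mu\theta u_{tx}$ over $(a,b)$. The term $\int_a^b\theta_{xx}\,dx=[\theta_x]_a^b$ vanishes by the Neumann condition $\theta_x(\cdot,a)=\theta_x(\cdot,b)=0$, yielding
\begin{equation*}
\frac{d}{dt}\iab\theta\,dx=\mu\iab\theta u_{tx}\,dx.
\end{equation*}
An integration by parts transforms the right-hand side into $-\mu\int_a^b\theta_x u_t\,dx$, again with vanishing boundary term since $u_t(\cdot,a)=u_t(\cdot,b)=0$.

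Adding the two identities, the terms $\pm\mu\int_a^b\theta_x u_t\,dx$ cancel, leaving
\begin{equation*}
\frac{d}{dt}\Bigl(\frac12\iab u_t^2\,dx+\frac12\iab u_x^2\,dx+\iab\theta\,dx\Bigr)=0.
\end{equation*}
Integrating this from $0$ to $t$ and using the initial conditions $u_t(0,\cdot)=v_0$, $u_x(0,\cdot)=u_{0,x}$, $\theta(0,\cdot)=\theta_0$ gives \eqref{balance}. There is no real obstacle here for regular solutions; the only thing to be careful about is that the Dirichlet condition on $u$ implies $u_t\equiv 0$ on the lateral boundary, which is precisely what makes the two boundary terms in the integrations by parts both vanish and makes the cross terms conspire to cancel.
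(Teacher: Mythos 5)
Your proposal is correct and follows essentially the same argument as the paper: multiply the momentum equation by $u_t$, integrate the heat equation in $x$ and integrate by parts using $u_t=0$ at the endpoints, add the two identities so the cross terms $\pm\mu\iab\theta_x u_t\,dx$ cancel, and integrate in time. The extra care you take in justifying the vanishing boundary terms is implicit in the paper's proof, but there is no substantive difference.
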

\begin{proof}
We multiply the first equation of \eqref{eq} by $u_t$ and integrate over $(a,b)$. It yields
\begin{align}\label{bal1}
\frac12\ddt\left(\iab u_t^2dx+\iab u_x^2dx\right)=\mu\iab\theta_x u_tdx.
\end{align}
Then, we integrate the second equation. We get
\begin{align}\label{bal2}
\ddt\iab\theta dx=-\mu\iab\theta_x u_tdx.
\end{align}
Now, we add (\ref{bal1}) and (\ref{bal2}) to obtain
\begin{align*}
\ddt\left(\frac12\iab u_t^2dx+\frac12\iab u_x^2dx+\iab\theta dx\right)=0.
\end{align*}
And so the total energy balance \eqref{balance} is obtained.
\end{proof}

In the present paper, we shall obtain the existence of regular global-in-time a unique solution to \eqref{eq} such that the temperature $\theta$ is positive. Below we formulate the definition of the solution.
\begin{defi}\label{weakdef}
We say that $(u,\theta)$ is a solution to (\ref{eq}) if:
\begin{itemize}
\item The initial data is of regularity
\begin{align*}
u_0\in H^2(a,b)\cap\habo,\quad v_0\in\habo,\quad \theta_0\in\hab.
\end{align*}
We also require that there exists $\tilde{\theta}>0$ such that $\theta_0(x)\geq\tilde{\theta}$ for all $x\in[a,b]$.
\item Solutions $\theta$ and $u$ satisfy
\begin{align}
u&\in L^{\infty}(0,T;H^2(a,b))\cap W^{1,\infty}(0,T,\habo)\cap W^{2,\infty}(0,T;\lab{2}),\nonumber\\
\theta&\in L^{\infty}(0,T;\hab)\cap H^1(0,T; \lab{2}).\label{uniq}
\end{align}
\item The momentum equation
\begin{align*}
 u_{tt}- u_{xx}=\mu\theta_x
\end{align*}
is satisfied for almost all $(t,x)\in (0,T)\times (a,b)$.
\item The entropy equation
\begin{align*}
\iab\theta_t\psi dx+\iab\theta_x\psi_x dx =\mu\iab\theta u_{tx}\psi dx
\end{align*}
holds for all $\psi\in C^{\infty}[a,b]$ and for almost all $t\in (0,T)$.
\item Initial conditions are attained in the following sense:
\[
\theta \in C([0,T];L^2(a,b)),
\]
\[
u\in C([0,T];\habo),\ u_t \in C([0,T];L^2(a,b)).
\]
\end{itemize}
\end{defi}
The following result is the main achievement of the paper.
\begin{tw}\label{main_theorem}
For any $T>0$ and any constant $\mu$ there exists a unique solution of \eqref{eq} in the sense of Definition \ref{weakdef} with a positive temperature.
\end{tw}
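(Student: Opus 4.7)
I would prove Theorem \ref{main_theorem} by a half-Galerkin scheme: fix the Dirichlet eigenbasis $\{e_k\}_{k\geq 1}$ of $-\partial_{xx}$ on $(a,b)$, project the wave equation onto $V_n=\mathrm{span}\{e_1,\dots,e_n\}$, and keep the heat equation in full form, viewing it as a scalar parabolic problem with coefficient $u^{(n)}_{tx}$ supplied by the approximate wave solution. At each level $n$, local existence of the coupled ODE-for-$u^{(n)}$/PDE-for-$\theta^{(n)}$ system is a routine fixed-point argument. All the work is then in the uniform-in-$n$ estimates and in passing to the limit.

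\textbf{Basic estimates and the Fisher information.} The computation in Proposition \ref{energy_cons} survives at each Galerkin level (the test function $u^{(n)}_t$ lies in $V_n$), giving uniform $L^\infty_t L^2_x$ control on $u^{(n)}_t$ and $u^{(n)}_x$ and uniform $L^\infty_t L^1_x$ control on $\theta^{(n)}$. Positivity of $\theta^{(n)}$ at fixed $n$ is a parabolic maximum principle. For the $n$-uniform version, I would exploit the Fisher information: dividing the heat equation by $\theta^{(n)}$ and integrating, using the Dirichlet condition on $u^{(n)}$ (to kill $\mu\int u^{(n)}_{tx}dx$) and the Neumann condition on $\theta^{(n)}$ (to kill the boundary term from the integration by parts of $\int\theta^{(n)}_{xx}/\theta^{(n)}dx$), yields
\begin{equation*}
\ddt\iab \log\theta^{(n)}\,dx \;=\; \iab \frac{(\theta^{(n)}_x)^2}{(\theta^{(n)})^2}\,dx .
\end{equation*}
Jensen's inequality and the $L^\infty_t L^1_x$ bound on $\theta^{(n)}$ bound the left-hand side from above once integrated in $t$, so the Fisher information $\|(\log\theta^{(n)})_x\|_{L^2}^2$ is uniformly bounded in $L^1(0,T)$.

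\textbf{Closing the energy estimate.} The $L^1_t$ Fisher bound alone is not enough. Here I would invoke the inequality of \cite{CFHS} together with one-dimensional Sobolev embeddings $H^1(a,b)\hookrightarrow L^\infty(a,b)$ and $W^{1,1}(a,b)\hookrightarrow L^\infty(a,b)$ to upgrade the Fisher information, combined with the $L^1$ mass bound on $\theta^{(n)}$, into the missing control on $\theta^{(n)}$ (pointwise boundedness away from $0$ and above). With $\theta^{(n)}$ trapped in a fixed compact subinterval of $(0,\infty)$, testing the heat equation by $-\theta^{(n)}_{xx}$ gives $\theta^{(n)}\in L^\infty_t H^1_x\cap H^1_t L^2_x$, and testing the wave equation (as usual, after differentiating or pairing with $u^{(n)}_{tt}$) gives the $L^\infty_t H^2_x$ bound on $u^{(n)}$ together with $u^{(n)}_{tt}\in L^\infty_t L^2_x$, matching the regularity class in \eqref{uniq}. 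Aubin-Lions then gives strong convergence of $\theta^{(n)}$ in $C([0,T];L^2(a,b))$, weak-$*$ convergence of $u^{(n)}$ in the wave spaces, and the nonlinear product $\theta^{(n)}u^{(n)}_{tx}$ passes to the limit by standard strong-weak arguments; positivity transfers from the uniform lower bound.

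\textbf{Uniqueness, regularity, and the key difficulty.} Uniqueness I would obtain by the standard energy estimate on the difference of two solutions with the same data, absorbing the cross-terms using the regularity \eqref{uniq} of each solution, one-dimensional Sobolev embeddings, and Gr\"onwall. Additional regularity, if required, follows by differentiating the equations in $t$ and bootstrapping. The main obstacle is the third step: the naive a priori bounds place $u_{tx}$ only in $L^\infty_t L^2_x$, which is borderline insufficient to handle the coupling term $\mu\theta u_{tx}$ in the heat equation and, simultaneously, to prevent $\theta$ from touching $0$ (which would collapse the parabolic structure). Breaking this circular dependence is precisely what the Fisher-information dissipation and the inequality from \cite{CFHS} are designed to do in this paper.
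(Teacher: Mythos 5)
Your scaffolding matches the paper (half-Galerkin in the wave variable with the heat equation solved exactly, uniform estimates, Aubin--Lions, a uniform lower bound for the temperature, and a Gronwall argument for uniqueness), but the analytic core of the proof is missing, and the step you do write down does not replace it. What you actually derive is the entropy identity $\frac{d}{dt}\int_a^b\log\theta^{(n)}\,dx=\int_a^b\bigl[(\log\theta^{(n)})_x\bigr]^2dx$, which after time integration gives only an $L^1(0,T)$ bound on $\|(\log\theta^{(n)})_x\|_{L^2}^2$; this is the classical estimate already available in the earlier literature and it is known to be insufficient for global regularity. The subsequent ``upgrade'' step is asserted, not proved: the inequality of \cite{CFHS} compares $\int_a^b[(\theta^{1/2})_{xx}]^2dx$ with $\int_a^b\theta[(\log\theta)_{xx}]^2dx$, i.e.\ it acts on second-order quantities, so it cannot convert your first-order, $L^1$-in-time entropy bound into uniform two-sided pointwise bounds on $\theta^{(n)}$; and even the paper's eventual lower bound on $\theta$ requires $\log\theta_n\in L^\infty(0,T;H^1)$, which in turn needs $u_{n,tx}\in L^\infty(0,T;L^2)$ --- a bound your scheme never produces. (Your closing remark that ``the naive a priori bounds place $u_{tx}$ only in $L^\infty_tL^2_x$'' is inaccurate: the basic energy gives only $u_t,u_x\in L^\infty_tL^2_x$; the $L^\infty_tL^2_x$ bound on $u_{tx}$ is precisely the hard part, and once it is available the rest of your scheme, including the parabolic estimate obtained by testing the heat equation, does close.)

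The missing idea is the coupled Fisher-information functional of Lemma \ref{estlem}: differentiating
\begin{equation*}
\frac12\left(\int_a^b\frac{\theta_x^2}{\theta}\,dx+\int_a^b u_{tx}^2\,dx+\int_a^b u_{xx}^2\,dx\right)
\end{equation*}
along the flow, the dangerous cross terms $\mp\mu\int_a^b\theta_{xx}u_{tx}\,dx$ produced by the heat equation (via the Fisher computation) and by the wave equation tested with $u_{txx}$ cancel exactly, leaving the dissipation $-\int_a^b\theta[(\log\theta)_{xx}]^2dx$ plus the single term $\frac{\mu}{2}\int_a^b\frac{\theta_x^2}{\theta}u_{tx}\,dx$. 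That term is then absorbed into the dissipation using the \cite{CFHS} inequality together with Gagliardo--Nirenberg and the conserved energy (Propositions \ref{estprop1} and \ref{estprop2}), and Gronwall yields the exponential bound of Theorem \ref{aprioritw} on $\|(\theta^{1/2})_x\|_{L^2}$, $\|u_{tx}\|_{L^2}$ and $\|u_{xx}\|_{L^2}$. This is the estimate that breaks the circular dependence you correctly identify at the end but do not resolve; without it, the uniform parabolic bounds for $\theta^{(n)}$, the $u^{(n)}_{tt}$ bound, the limit passage in $\theta^{(n)}u^{(n)}_{tx}$, the positivity argument, and the uniqueness proof (which uses exactly the regularity class \eqref{uniq}) all lack their starting point.
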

As far as we know this is the first global-in-time existence and uniqueness result independent of the size of initial data for the minimal
thermoelasticity model. The minimal model is introduced as a particular case of the general 1D thermoelasticity model considered for instance in \cite{slemrod}:
\begin{equation}\label{general}
\begin{cases}
u_{tt}-\alpha(\theta, u_x)u_{xx}=\beta(\theta, u_x)\theta_x,&\textrm{in }(0,T)\times (a,b),\\
\gamma(\theta, u_x)\theta_t-\delta(\theta_x)\theta_{xx}=\beta(\theta, u_x)\theta u_{tx},&\textrm{in }(0,T)\times (a,b),\\
u(\cdot,a)=u(\cdot,b)=\theta_x(\cdot,a)=\theta_x(\cdot,b)=0,&\textrm{on } (0,T),\\
u(0,\cdot)=u_0,\ u_t(0,\cdot)=v_0,\ \theta(0,\cdot)=\theta_0>0.
\end{cases}
\end{equation}
The physical meaning of functions $\alpha,\beta,\gamma$ and $\delta$ is discussed in Section \ref{sec2}. Here we only notice that our minimal model appears
as a particular case of the general model when all the thermodynamical parameter functions $\alpha,\beta,\gamma$ and $\delta$ equal one. A minimal model has been studied for example in \cite{TC1, TC2}. In \cite{TC1} the local existence of a regular unique solution to \eqref{eq} with positive temperature is obtained. Some very weak measure valued global solution is also constructed. In \cite{TC2} the global-in-time solution in the 3D case of \eqref{eq} is obtained, the construction applies also to the 2D case. Its temperature is a positive measure and obtained weak solution with defect measure satisfies the weak-strong uniqueness. However, the present result is the first to expose the regular global-in-time solution. Moreover, this solution is unique and the temperature involved is positive.

In the case of the studies of more general systems of the form \eqref{general} no global-in-time solutions for the arbitrary size of initial data are known, even for quite regular functions $\alpha,\beta,\gamma,\delta$. What is known, is the global regular solution for small initial data (see \cite{hrusa_tarabek, racke, racke_shibata, slemrod}). In \cite{rsj} the global solution is obtained for initial data close to the equilibrium. Also, the positivity of temperature is an issue and the constructed solutions were obtained often under the assumption $\beta>0$, which excludes the maximum principles in the heat equation and hence the positivity of temperature.

A slightly different approach was taken in \cite{daf_chen}, where the solution is obtained via the vanishing viscosity method to the 1D thermoelasticity model similar to \eqref{general}, but not fully equivalent. Moreover, for such models finite-time blowups of solutions were obtained, see \cite{daf_hsiao} or \cite{hrusa_m}. An approach via the hyperbolic laws formulation was also recently invoked in \cite{galanopoulou}.

Our result, obtained for the minimal model \eqref{eq}, makes use of the type of estimate which, according to our knowledge, is completely new in this area. We find a new energy-like functional built upon the Fisher information of temperature and corresponding higher derivatives of displacement and its velocity. The above-mentioned crucial formula is derived formally in Section \ref{sec3}. Next, embeddings and a recent functional inequality in \cite{CFHS} allow us to close a priori estimates. Those, applied to the half-Galerkin approximation, suggested in \cite{TC2}, yield the described solution (see Section \ref{sec4}). Section \ref{sec5} is devoted to the proof of the uniqueness.

In the last section, we discuss the potential applicability of our method in the general 1D model \eqref{general} as well as in the higher dimensional version of \eqref{eq}.

Last, but not least, let us state a special 1D case of the functional inequality obtained recently in \cite{CFHS}. It turns out very useful
when dealing with the Fisher information and its dissipation.
\begin{lem}\label{cfhslem}
Let us consider a positive function $\psi\in C^2([a,b])$, $\psi_x(a)=\psi_x(b)=0$, then the following inequality
$$
\iab\left[\left(\psi^{\sfrac12}\right)_{xx}\right]^2dx\leq \frac{13}{8}\iab\psi\left[\left(\log\psi\right)_{xx}\right]^2dx
$$
holds.
\end{lem}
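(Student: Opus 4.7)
The plan is to reduce the inequality to an elementary identity via the substitution $\phi:=\psi^{1/2}$. Since $\psi$ is positive on $[a,b]$, $\phi$ is a positive $C^{2}$ function; the boundary condition $\psi_x(a)=\psi_x(b)=0$, combined with $\psi_x=2\phi\phi_x$, yields $\phi_x(a)=\phi_x(b)=0$. A direct differentiation shows $(\psi^{1/2})_{xx}=\phi_{xx}$, while
$$(\log\psi)_{xx}=\frac{2\phi_{xx}}{\phi}-\frac{2\phi_x^{2}}{\phi^{2}},\qquad \psi\bigl[(\log\psi)_{xx}\bigr]^{2}=4\Bigl(\phi_{xx}-\frac{\phi_x^{2}}{\phi}\Bigr)^{2}.$$
Hence the claimed inequality reduces to proving
$$\iab \phi_{xx}^{2}\,dx \leq \frac{13}{2}\iab \Bigl(\phi_{xx}-\frac{\phi_x^{2}}{\phi}\Bigr)^{2}dx.$$

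I would then expand the square on the right-hand side and treat the cross term via integration by parts. Using the primitive $(\phi_x^{3})_x=3\phi_x^{2}\phi_{xx}$ together with the vanishing of the boundary contributions (which is where $\phi_x(a)=\phi_x(b)=0$ enters), one obtains
$$\iab \frac{\phi_x^{2}\phi_{xx}}{\phi}\,dx=\frac{1}{3}\iab \frac{\phi_x^{4}}{\phi^{2}}\,dx.$$
Substituting this into the expansion of the square gives the identity
$$\iab \Bigl(\phi_{xx}-\frac{\phi_x^{2}}{\phi}\Bigr)^{2}dx=\iab \phi_{xx}^{2}\,dx+\frac{1}{3}\iab \frac{\phi_x^{4}}{\phi^{2}}\,dx.$$

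Since the right-hand side of this identity is manifestly at least $\iab\phi_{xx}^{2}\,dx$, the lemma follows at once; in fact, in one dimension the optimal constant is merely $1$, and the larger value $\frac{13}{8}$ in the statement reflects the higher-dimensional formulation in \cite{CFHS} from which this version is specialized. There is no genuine obstacle to the proof: the only technical points are verifying positivity of $\phi$, which is needed so that division by $\phi$ is legitimate, and the passage of the Neumann condition from $\psi$ to $\phi$ — both are immediate from the hypotheses.
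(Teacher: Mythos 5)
Your proof is correct, but it is not the paper's route: the paper gives no proof of this lemma at all — it simply quotes it as the one-dimensional special case of the functional inequality established in \cite{CFHS}, where the constant $\frac{13}{8}$ arises from a genuinely multi-dimensional argument (Hessian and boundary terms on a domain with Neumann conditions). Your substitution $\phi=\psi^{1/2}$ and the integration by parts based on $(\phi_x^{3})_x=3\phi_x^{2}\phi_{xx}$, with the boundary terms killed by $\phi_x(a)=\phi_x(b)=0$, do yield the identity
\begin{equation*}
\int_a^b\Bigl(\phi_{xx}-\tfrac{\phi_x^{2}}{\phi}\Bigr)^{2}dx=\int_a^b\phi_{xx}^{2}\,dx+\frac{1}{3}\int_a^b\frac{\phi_x^{4}}{\phi^{2}}\,dx,
\end{equation*}
and since $\psi\bigl[(\log\psi)_{xx}\bigr]^{2}=4\bigl(\phi_{xx}-\phi_x^{2}/\phi\bigr)^{2}$, this gives the lemma with the constant $\frac14$ in the stated normalization (your remark that the ``optimal constant is $1$'' refers to the reduced form; translated back it is $\frac14$, which of course implies the stated bound because $\frac14\le\frac{13}{8}$). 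The trade-off is clear: the paper's approach imports a ready-made inequality valid in higher dimensions, which is natural given that \cite{CFHS} is already cited, whereas your argument is elementary, self-contained, specific to one dimension, and even sharpens the constant — it would make the paper independent of \cite{CFHS} at this point. The only technical hypotheses (positivity of $\phi$ so that division is allowed, transfer of the Neumann condition via $\psi_x=2\phi\phi_x$, and $\phi_x^{3}\in C^{1}$ so the integration by parts is legitimate) are all verified as you indicate, so there is no gap.
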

\section{Physical derivation}\label{sec2}
In the present article, we discuss a brief derivation of the physical foundations of a system \eqref{eq} and discuss if the assumptions leading to it are physically relevant. For a more detailed explanation, we refer to \cite{TC2} or to \cite{slemrod}.

System \eqref{eq} is a one-dimensional version of the mechanical description of the balance of momentum as well as the balance of internal energy, under some assumptions on the constitutive relations. The general system reads
\begin{equation}
\begin{cases}
\rho u_{tt}=\nabla \cdot \sigma,&\textrm{in }(0,T)\times \Omega,\\
e_t+\nabla\cdot q=\sigma\cdot \operatorname{div} u_t,&\textrm{in }(0,T)\times \Omega,
\end{cases}
\end{equation}
where $\rho$ is the density of the elastic body, $u$ the displacement vector, $\sigma=\sigma(\theta, \nabla u)$ ($\theta$ is temperature) the stress tensor, $e=e(\theta, \nabla u)$ the internal energy and $q=q(\theta, \nabla \theta)$ the internal energy flux. They all can be represented by the Helmholtz free energy function $\psi:=e-\theta s$, where $s$ is the entropy. Moreover, $\sigma$ and $s$ are expressed as
\[
\sigma (\theta, \nabla u)=\psi_{F}(\theta, \nabla u),\;\;s(\theta, \nabla u)=-\psi_{\theta}(\theta, \nabla u),
\]
subscripts denote the partial derivatives ($\psi_F$ means the derivative with respect to the second variable).
The Helmholtz function representation of the parameters and 1D considerations lead \cite{slemrod} to the system \eqref{general}.

However, one can impose further structural assumptions on the Helmholtz function (for the details see \cite{TC2}), in particular,  the decoupling of energy
\[
e(\theta, \nabla u)=e_1(\theta)+e_2(\nabla u)
\]
and entropy
\[
s(\theta, \nabla u)=s_1(\theta)+s_2(\nabla u),
\]
together with the following prescription of $e_2, s_2$
\[
e_2(\nabla u)=\frac12|\nabla u|^2,\; s_2(\nabla u)=\mu\nabla\cdot u,
\]
which means that we consider only small deformations (here $\nabla\cdot u$ represents linearization of Jacobian).
Then we are led to the following simplified model for the balance of momentum and internal energy
\begin{equation}
\begin{cases}
\rho u_{tt}=\nabla \cdot (\nabla u-\mu\theta Id) ,&\textrm{in }(0,T)\times \Omega,\\
C_V(\theta)\theta_t+\nabla\cdot q=-\mu \theta \operatorname{div} u_t,&\textrm{in }(0,T)\times \Omega,
\end{cases}
\end{equation}
where $C_V:=\partial_{\theta}e_1(\theta)$ is the heat capacity. When assuming further
the density $\rho=1$ and the internal energy flux to satisfy the Fick law $q:=-\kappa(\theta)\nabla \theta$,
we arrive at the higher-dimensional extension of \eqref{eq}. Assuming further $\kappa=1$ and $C_V=1$ (the latter assumptions can be considered as an approximation of the high-temperature Dulong-Petit model), we obtain exactly
\eqref{eq} in higher dimensions.

\section{A priori estimates}\label{sec3}
The present section is devoted to the derivation of a crucial new estimate, which appears when estimating the evolution of the Fisher information of temperature along the trajectories of \eqref{eq}. Throughout this section we assume solutions of \eqref{eq} to be regular.
Rigorous treatment will be given in Section \ref{sec4}.

The first novel step is the following identity which holds along the trajectories of \eqref{eq}.
\begin{lem}\label{estlem}
Let us assume that $u\in C^3([0,T]\times[a,b])$ and $\theta\in C^{1,2}([0,T]\times[a,b])$, $\theta>0$ are solutions of (\ref{eq}). Then, the following identity holds
\begin{align*}
\frac12\ddt\left(\int_a^b\frac{\theta_x^2}{\theta}dx+\int_a^bu_{tx}^2dx+\int_a^bu_{xx}^2dx\right)=-\int_a^b\theta\left[(\log\theta)_{xx}\right]^2dx+\frac{\mu}2\int_a^b\frac{\theta^2_x}{\theta}u_{tx}dx.
\end{align*}
\end{lem}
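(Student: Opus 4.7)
The plan is to differentiate the three terms on the left-hand side separately, substitute the two PDEs from \eqref{eq} to eliminate $\theta_t$ and $u_{tt}$, and watch the cross terms cancel. The only slightly subtle step is an algebraic identity involving the Fisher information dissipation, which reduces to a single integration by parts.

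First, I would handle the hyperbolic part. Differentiating $\frac12\int u_{tx}^2dx$ in time gives $\int u_{tx}u_{ttx}\,dx$; integrating by parts in $x$ produces no boundary contribution because $u(\cdot,a)=u(\cdot,b)=0$ forces $u_t=u_{tt}=0$ at the endpoints, and yields $-\int u_{txx}u_{tt}\,dx$. Substituting $u_{tt}=u_{xx}+\mu\theta_x$ splits this into $-\int u_{txx}u_{xx}\,dx-\mu\int u_{txx}\theta_x\,dx$. The first piece equals $-\tfrac12\ddt\int u_{xx}^2\,dx$, which combines with the $\int u_{xx}^2$ term on the left, and the second piece integrates by parts (using $\theta_x=0$ at $a,b$) to $\mu\int u_{tx}\theta_{xx}\,dx$. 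So
\begin{equation*}
\tfrac12\ddt\!\left(\iab u_{tx}^2dx+\iab u_{xx}^2dx\right)=\mu\iab u_{tx}\theta_{xx}dx.
\end{equation*}

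Next, the Fisher information. Expanding the time derivative and integrating by parts once in the cross term (again using $\theta_x=0$ at the boundary) gives
\begin{equation*}
\tfrac12\ddt\iab\frac{\theta_x^2}{\theta}dx=-\iab\frac{\theta_{xx}}{\theta}\theta_t\,dx+\tfrac12\iab\frac{\theta_x^2}{\theta^2}\theta_t\,dx.
\end{equation*}
Plugging in $\theta_t=\theta_{xx}+\mu\theta u_{tx}$ yields four terms: one of them, $-\mu\int\theta_{xx}u_{tx}dx$, exactly cancels the $u$-contribution from the previous step; another is the desired $\frac{\mu}{2}\int\frac{\theta_x^2}{\theta}u_{tx}dx$; and the two remaining ``pure $\theta$'' terms are $-\int\frac{\theta_{xx}^2}{\theta}dx+\tfrac12\int\frac{\theta_x^2\theta_{xx}}{\theta^2}dx$.

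It remains to identify these pure-$\theta$ terms with $-\int\theta[(\log\theta)_{xx}]^2dx$. Expanding $(\log\theta)_{xx}=\theta_{xx}/\theta-\theta_x^2/\theta^2$ shows that the required identity reduces to
\begin{equation*}
\tfrac32\iab\frac{\theta_x^2\theta_{xx}}{\theta^2}dx=\iab\frac{\theta_x^4}{\theta^3}dx.
\end{equation*}
This is the one place I would stop to compute: write $\theta_x^2\theta_{xx}=\tfrac13(\theta_x^3)_x$ and integrate by parts, noting $\theta_x=0$ at the boundary kills the boundary term and $(1/\theta^2)_x=-2\theta_x/\theta^3$ gives the factor $2/3$ on the right. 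That matches, and the identity is proved.

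There is no real obstacle here beyond bookkeeping; the only place where something must ``work out'' is the last algebraic reduction, and it does so through a single integration by parts whose boundary term vanishes thanks to the Neumann condition on $\theta$. Throughout, the regularity $u\in C^3$, $\theta\in C^{1,2}$ with $\theta>0$ justifies all integrations by parts and all divisions by $\theta$.
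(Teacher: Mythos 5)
Your computation is correct and follows essentially the same route as the paper: differentiate the functional, substitute the two equations, integrate by parts (all boundary terms vanishing by $u=0$ and $\theta_x=0$ at $a,b$), and let the $\mu\int\theta_{xx}u_{tx}$ terms cancel between the hyperbolic and Fisher-information parts. The only cosmetic difference is that the paper routes the Fisher term through $\psi=\theta^{\sfrac12}$ and recognizes the dissipation by completing a square pointwise, whereas you work with $\theta$ directly and close the algebra with the integration-by-parts identity $\tfrac32\iab\theta_x^2\theta_{xx}\theta^{-2}dx=\iab\theta_x^4\theta^{-3}dx$, which you verify correctly.
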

\begin{proof}
We divide the second equation in \eqref{eq} by $\theta^{1/2}$ and  obtain
\begin{align}\label{rowlem1}
2\ddt\theta^{\sfrac12}-2(\theta^{\sfrac12})_{xx}-\frac12\frac{\theta_x^2}{\theta^{\sfrac32}}=\mu\theta^{\sfrac12}u_{tx}.
\end{align}
In addition, we have $\frac12\frac{\theta_x^2}{\theta^{\sfrac32}}=2\frac{(\theta^{\sfrac12})^2_x}{\theta^{\sfrac12}}$. Thus, we get
\begin{align}\label{row1lem1}
\ddt&\int_a^b\frac{\theta_x^2}{\theta}dx=4\ddt\int_a^b\left(\theta^{\sfrac1 2}\right)^2_xdx=8\int_a^b\left(\theta^{\sfrac1 2}\right)_x\left(\theta^{\sfrac1 2}\right)_{xt}dx\nonumber\\
&\stackrel{(\ref{rowlem1})}=4\iab\left(\theta^{\sfrac1 2}\right)_x\left[2(\theta^{\sfrac12})_{xx}+2\frac{(\theta^{\sfrac12})^2_x}{\theta^{\sfrac12}}+\mu\theta^{\sfrac12}u_{tx}\right]_x dx
=-8\iab\left[\left(\theta^{\sfrac1 2}\right)_{xx}\right]^2dx\nonumber\\
&\quad+16\iab\left(\theta^{\sfrac1 2}\right)_x\frac{\left(\theta^{\sfrac1 2}\right)_x\left(\theta^{\sfrac1 2}\right)_{xx}}{\theta^{\sfrac1 2}}dx-8\iab\frac{\left[\left(\theta^{\sfrac1 2}\right)_x\right]^4}{\theta}dx
+4\mu\iab\left(\theta^{\sfrac1 2}\right)_x\left[\theta^{\sfrac12}u_{tx}\right]_xdx.
\end{align}
Next, observe that
\begin{align}\label{row2lem1}
-8&\iab\left[\left(\theta^{\sfrac1 2}\right)_{xx}\right]^2dx
+16\iab\left(\theta^{\sfrac1 2}\right)_x\frac{\left(\theta^{\sfrac1 2}\right)_x\left(\theta^{\sfrac1 2}\right)_{xx}}{\theta^{\sfrac1 2}}dx-8\iab\frac{\left[\left(\theta^{\sfrac1 2}\right)_x\right]^4}{\theta}dx\nonumber\\
&=-8\iab\theta\left(\frac{\left(\theta^{\sfrac1 2}\right)_{xx}}{\theta^{\sfrac12}}-\frac{\left(\theta^{\sfrac1 2}\right)_x^2}{\theta}\right)^2dx=-8\iab\theta\left[\left(\log\theta^{\sfrac12}\right)_{xx}\right]^2dx\nonumber \\
&=-2\iab\theta\left[\left(\log\theta\right)_{xx}\right]^2dx.
\end{align}
Moreover, we calculate
\begin{align}\label{row3lem1}
4\mu\iab\left(\theta^{\sfrac12}\right)_x\left(\theta^{\sfrac12}u_{tx}\right)_x dx&=-4\mu\iab\left(\theta^{\sfrac12}\right)_{xx}\theta^{\sfrac12}u_{tx}dx\nonumber\\
&=-2\mu\iab\theta_{xx}u_{tx}dx+\mu\iab\frac{\theta_x^2}{\theta}u_{tx}dx,
\end{align}
where we used the identity $\left(\theta^{\sfrac12}\right)_{xx}=\frac12\theta^{-\tfrac{1}2}\theta_{xx}-\frac14\theta^{-\tfrac{3}2}\theta_x^2$.

Now, plugging (\ref{row2lem1}) and \eqref{row3lem1} in (\ref{row1lem1}) yields
\begin{align}\label{row5lem1}
\ddt\frac12\iab\frac{\theta^2_x}{\theta}dx=-\iab\theta\left[\left(\log\theta\right)_{xx}\right]^2dx-\mu\iab\theta_{xx}u_{tx}dx+\frac{\mu}2\iab\frac{\theta_x^2}{\theta}u_{tx}dx.
\end{align}
Next, we multiply the first equation on (\ref{eq}) by $u_{txx}$ and we integrate over $[a,b]$. Therefore, we obtain
\begin{align*}
\iab u_{tt}u_{txx}dx-\iab u_{xx}u_{txx}dx=\mu\iab\theta_xu_{txx}dx.
\end{align*}
This after integration by parts gives us
\begin{align}\label{row4lem1}
\frac12\ddt\left(\iab u_{tx}^2dx+\iab u_{xx}^2dx\right)=\mu\iab\theta_{xx}u_{tx}dx.
\end{align}
Then, we add (\ref{row4lem1}) and (\ref{row5lem1}) to obtain
\begin{align*}
\frac12\ddt\left(\iab u_{tx}^2dx+\iab u_{xx}^2dx+\iab\frac{\theta^2_x}{\theta}dx\right)=-\iab\theta\left[\left(\log\theta\right)_{xx}\right]^2dx+\frac{\mu}2\iab\frac{\theta_x^2}{\theta}u_{tx}dx.
\end{align*}
\end{proof}
The next step is to estimate the last term on the right-hand side of the identity in Lemma \ref{estlem}. This is done via the 1D embeddings as well as inequality (\ref{cfhslem}).
\begin{prop}\label{estprop1}
Let us assume that $u\in C^3([0,T]\times[a,b])$ and $\theta\in C^{1,2}([0,T]\times[a,b])$, $\theta>0$ are solutions of (\ref{eq}). Then for any $\epsilon>0$ and $\epsilon_1>0$ the following inequality
\begin{align}\label{thprop1}
\begin{split}
\frac{\mu}2\iab\frac{\theta_x^2}{\theta}u_{tx}dx&\leq \epsilon \iab\theta\left[\left(\log\theta\right)_{xx}\right]^2dx+c_1\|u_{tx}\|_{\lab{2}}^2\\
&\quad+\epsilon_1\left(\iab \left(\theta^{\sfrac12}\right)_x^2dx\right)^2+c_2\iab\left(\theta^{\sfrac12}\right)_x^2dx
\end{split}
\end{align}
holds and $c_1$ and $c_2$ depend on $\|u_0\|_{\habo}$, $\|\theta_0\|_{\lab1}$ $\|v_0\|_{\lab2}$, $a$, $b$, $\mu$, $\epsilon$ and $\epsilon_1$.
\end{prop}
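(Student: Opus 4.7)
Write $g:=\theta^{\sfrac12}$ so that $\frac{\theta_x^2}{\theta}=4g_x^2$; the left-hand side of \eqref{thprop1} becomes $2\mu\iab g_x^2 u_{tx}\,dx$. My plan is to transfer the derivative from $u_{tx}$ onto $g_x^2$ by integration by parts in $x$. The Dirichlet condition $u(\cdot,a)=u(\cdot,b)=0$ forces $u_t$ to vanish at $a$ and $b$, so the boundary term drops out and we get
$$
\iab g_x^2 u_{tx}\,dx = -2\iab g_x g_{xx}\,u_t\,dx.
$$
This is the single structural step; everything after it is inequality manipulation.

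Next I would control $\|u_t\|_{L^\infty}$ by combining two ingredients: the 1D Gagliardo--Nirenberg estimate $\|u_t\|_{L^\infty}^2\le 2\|u_t\|_{\lab{2}}\|u_{tx}\|_{\lab{2}}$ (valid since $u_t\in\habo$ a.e.\ $t$), and the energy identity of Proposition~\ref{energy_cons}, which bounds $\|u_t\|_{\lab{2}}^2$ uniformly in $t$ by $\|v_0\|_{\lab{2}}^2+\|u_{0,x}\|_{\lab{2}}^2+2\|\theta_0\|_{\lab{1}}=:2E_0$. These together yield $\|u_t\|_{L^\infty}\le C_0\|u_{tx}\|_{\lab{2}}^{1/2}$ with $C_0$ depending on $E_0$ and $b-a$. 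Applying Hölder in the integrated-by-parts expression gives
$$
\Bigl|\tfrac{\mu}{2}\iab\tfrac{\theta_x^2}{\theta}u_{tx}\,dx\Bigr|\le 4|\mu|\,C_0\,\|g_{xx}\|_{\lab{2}}\,\|g_x\|_{\lab{2}}\,\|u_{tx}\|_{\lab{2}}^{1/2}.
$$

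To close the bound I would use Young's inequality twice. First peel off the highest derivative of $g$: $\|g_{xx}\|\bigl(\|g_x\|\|u_{tx}\|^{1/2}\bigr)\le\epsilon'\|g_{xx}\|^2+C(\epsilon')\|g_x\|^2\|u_{tx}\|$. Then split the remaining product: $\|g_x\|^2\|u_{tx}\|\le\epsilon_1\|g_x\|^4+C(\epsilon_1)\|u_{tx}\|^2$. Finally Lemma~\ref{cfhslem} converts $\|g_{xx}\|_{\lab{2}}^2$ into $\tfrac{13}{8}\iab\theta[(\log\theta)_{xx}]^2dx$, so choosing $\epsilon'$ proportional to $\epsilon$ produces exactly \eqref{thprop1}; the $c_2\iab(\theta^{\sfrac12})_x^2\,dx$ slot in the statement is mere slack that our argument does not require.

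The delicate point---and the only place where the argument would break if handled naïvely---is the exponent of $\|u_{tx}\|$. If one estimated $\|u_t\|_{L^\infty}$ merely via Poincaré by $C\|u_{tx}\|_{\lab{2}}$, the subsequent Young splitting would produce $\|u_{tx}\|_{\lab{2}}^4$ rather than the linear $\|u_{tx}\|_{\lab{2}}^2$ the statement demands. It is the interpolation of $u_t$ between its $L^2$ norm (controlled a priori by the energy identity) and its $H^1_0$ norm that supplies the crucial half-power $\|u_{tx}\|^{1/2}$, which then Young-splits cleanly as $\epsilon_1\|g_x\|^4+C\|u_{tx}\|^2$. This is why the constants $c_1,c_2$ are forced to depend on the data-dependent quantities $\|v_0\|_{\lab{2}}$, $\|u_0\|_{\habo}$ and $\|\theta_0\|_{\lab{1}}$ listed in the proposition.
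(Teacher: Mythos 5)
Your proposal is correct and follows essentially the same route as the paper: integrate by parts to move the derivative from $u_{tx}$ onto $(\theta^{\sfrac12})_x^2$ (using $u_t=0$ at $a,b$), bound $\|u_t\|_{\lab\infty}$ by Gagliardo--Nirenberg interpolation together with the energy identity \eqref{balance}, apply Young's inequality, and convert $\|(\theta^{\sfrac12})_{xx}\|_{\lab2}^2$ via Lemma \ref{cfhslem}. The only cosmetic differences are the order of the Young/Lemma \ref{cfhslem} steps and your use of the sharp $H^1_0$ interpolation, which renders the $c_2\iab(\theta^{\sfrac12})_x^2dx$ term superfluous, whereas the paper's form of Gagliardo--Nirenberg produces it.
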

\begin{proof}
Let us consider the left-hand side of (\ref{thprop1}).
\begin{align}\label{prop1in1}
\frac{\mu}2\iab\frac{\theta_x^2}{\theta}u_{tx}dx&=2\mu\iab\left(\theta^{\sfrac12}\right)_x^2u_{tx}dx=-4\mu\iab u_t\left(\theta^{\sfrac12}\right)_x\left(\theta^{\sfrac12}\right)_{xx}dx\nonumber\\
&\leq\frac{8\epsilon}{13}\iab\left[\left(\theta^{\sfrac12}\right)_{xx}\right]^2dx+c\iab u_t^2\left(\theta^{\sfrac12}\right)_x^2dx\nonumber\\
&\leq \epsilon \iab\theta\left[\left(\log\theta\right)_{xx}\right]^2dx+c\|u_t\| _{\lab{\infty}}^2\iab\left(\theta^{\sfrac12}\right)_x^2dx,
\end{align}
where we used Lemma \ref{cfhslem}.

Then, the Gagliardo-Nirenberg inequality yields
\begin{align*}
\|u_t\|_{\lab{\infty}}^2\iab\left(\theta^{\sfrac12}\right)_x^2dx&\leq c\|u_{t}\|_{\hab}\|u_t\|_{\lab{2}}\iab\left(\theta^{\sfrac12}\right)_x^2dx
\\&\stackrel{(\ref{balance})}{\leq} c_1\iab\left(\theta^{\sfrac12}\right)_x^2dx+{c}_2\|u_{tx}\|_{\lab{2}}\iab\left(\theta^{\sfrac12}\right)_x^2dx\\
&\leq c_1\iab\left(\theta^{\sfrac12}\right)_x^2dx+ c_2\|u_{tx}\|_{\lab{2}}^2+\epsilon_1\left(\iab\left(\theta^{\sfrac12}\right)_x^2dx\right)^2.
\end{align*}
Inserting the above inequality into (\ref{prop1in1}) gives the claim.
\end{proof}
Next, we handle the term $\left(\iab\left(\theta^{\sfrac12}\right)_x^2dx\right)^2$ utilizing Lemma \ref{cfhslem}.
\begin{prop}\label{estprop2}
The positive temperature $\theta$ solving \eqref{eq} satisfies
\begin{align*}
\left(\iab\left(\theta^{\sfrac12}\right)_x^2dx\right)^2\leq c\iab\theta\left[\left(\log\theta\right)_{xx}\right]^2dx,
\end{align*}
where constant $c$ depends on \eqref{balance}.
\end{prop}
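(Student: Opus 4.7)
The plan is to reduce the claim to the functional inequality of Lemma \ref{cfhslem} and then to perform one integration by parts, exploiting the Neumann boundary condition for $\theta^{1/2}$ and the $L^1$-control of $\theta$ given by the energy balance.

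First, since $\theta>0$ and $\theta_x(\cdot,a)=\theta_x(\cdot,b)=0$, the function $w:=\theta^{1/2}$ is of class $C^2$ on $[a,b]$ and satisfies the Neumann boundary condition $w_x(a)=w_x(b)=0$. Therefore Lemma \ref{cfhslem} applies to $\psi=\theta$, giving
$$
\iab w_{xx}^2\,dx\leq \frac{13}{8}\iab\theta\bigl[(\log\theta)_{xx}\bigr]^2dx.
$$
Hence it suffices to show that
$$
\left(\iab w_x^2\,dx\right)^2\leq c\,\iab w_{xx}^2\,dx.
$$

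Next, integration by parts together with $w_x(a)=w_x(b)=0$ yields
$$
\iab w_x^2\,dx=-\iab w\,w_{xx}\,dx,
$$
and the Cauchy--Schwarz inequality gives
$$
\iab w_x^2\,dx\leq \|w\|_{\lab{2}}\|w_{xx}\|_{\lab{2}} =\left(\iab\theta\,dx\right)^{1/2}\left(\iab w_{xx}^2\,dx\right)^{1/2}.
$$
Squaring and invoking Proposition \ref{energy_cons} to bound $\iab\theta\,dx$ by a constant depending only on the initial data completes the proof.

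There is no genuine obstacle here; the only point to notice is the conjunction of three facts used in sequence (the Neumann condition inherited by $\theta^{1/2}$, the inequality of Lemma \ref{cfhslem}, and the conservation of the quantity $\iab\theta\,dx$ from the energy balance \eqref{balance}). Together these collapse the apparently quartic object $(\iab(\theta^{1/2})_x^2\,dx)^2$ onto the Fisher-dissipation term $\iab\theta[(\log\theta)_{xx}]^2\,dx$ that naturally appears on the right-hand side of Lemma \ref{estlem}.
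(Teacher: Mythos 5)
Your proof is correct and follows essentially the same route as the paper: integrate $\iab\bigl(\theta^{\sfrac12}\bigr)_x^2dx$ by parts using the Neumann condition, apply Cauchy--Schwarz, and then control $\iab\bigl[(\theta^{\sfrac12})_{xx}\bigr]^2dx$ by the Fisher dissipation via Lemma \ref{cfhslem}, with $\iab\theta\,dx$ bounded by the energy balance \eqref{balance}. The only difference is the order of presentation, which is immaterial.
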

\begin{proof}
We calculate
\begin{align*}
\left(\iab\left(\theta^{\sfrac12}\right)_x^2dx\right)^2&=\left(\iab\left(\theta^{\sfrac12}\right)_x\left(\theta^{\sfrac12}\right)_xdx\right)^2=\left(-\iab\left(\theta^{\sfrac12}\right)_{xx}\left(\theta^{\sfrac12}\right)dx\right)^2\\
&\leq\iab\theta dx\iab\left[\left(\theta^{\sfrac12}\right)_{xx}\right]^2dx\leq c\iab\theta\left[\left(\log\theta\right)_{xx}\right]^2dx,
\end{align*}
where in the last inequality we used Lemma \ref{cfhslem} and equality (\ref{balance}).
\end{proof}
We are now in a position to prove the main estimate of this section and a crucial ingredient of the present paper.
The following estimate is essential in arriving at further regularity estimates and plays a crucial role in both the existence and uniqueness parts of the main Theorem \ref{main_theorem}.

\begin{tw}\label{aprioritw}
For all $t\in [0,T]$ a regular solution $(u, \theta)$ to \eqref{eq}, with $\theta>0$, satisfies
\begin{align*}
\iab \left(\theta^{\sfrac12}\right)_x^2 dx+\iab u_{tx}^2dx+\iab u_{xx}^2dx\leq C_1 e^{C_2t},
\end{align*}
where $C_1$ and $C_2$ depend on $a$, $b$, $\mu$, $\|\theta_0^{\sfrac12}\|_{\hab}$, $\|u_0\|_{H^2(a,b)}$ and $\|v_0\|_{\habo}$.
\end{tw}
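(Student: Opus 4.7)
The plan is to combine the three preceding results of Section \ref{sec3} into a single Gronwall inequality for the functional
\[
F(t) := \iab \frac{\theta_x^2}{\theta}\,dx + \iab u_{tx}^2\,dx + \iab u_{xx}^2\,dx = 4\iab\left(\theta^{\sfrac12}\right)_x^2 dx + \iab u_{tx}^2\,dx + \iab u_{xx}^2\,dx,
\]
which (up to a harmless factor) is exactly the quantity we wish to control. The point is that Lemma \ref{estlem} produces, when $\mu=0$, a strictly dissipative evolution for $F$ with the Fisher-information-type dissipation $\int \theta [(\log\theta)_{xx}]^2 dx$; so the strategy is to show that the coupling term $\tfrac{\mu}{2}\int \theta_x^2\theta^{-1}u_{tx}\,dx$ can be majorised by (a fraction of) that dissipation plus a linear multiple of $F$ itself.

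Concretely, the first step is to rewrite Lemma \ref{estlem} as
\[
\frac12 \ddt F(t) = -\iab \theta\bigl[(\log\theta)_{xx}\bigr]^2 dx + \frac{\mu}{2}\iab \frac{\theta_x^2}{\theta}u_{tx}\,dx,
\]
and then to apply Proposition \ref{estprop1} with parameters $\epsilon,\epsilon_1>0$ to control the last term. This gives
\[
\frac12 \ddt F \leq -(1-\epsilon)\iab \theta[(\log\theta)_{xx}]^2 dx + c_1\|u_{tx}\|_{\lab{2}}^2 + \epsilon_1\Bigl(\iab(\theta^{\sfrac12})_x^2 dx\Bigr)^2 + c_2 \iab (\theta^{\sfrac12})_x^2 dx.
\]
The remaining quartic term is precisely the object handled by Proposition \ref{estprop2}, whose application allows us to bound it by $\epsilon_1 c\int \theta[(\log\theta)_{xx}]^2 dx$, with $c$ depending only on the conserved quantity \eqref{balance}. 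Fixing $\epsilon$ and $\epsilon_1$ once and for all small enough that $(1-\epsilon)-\epsilon_1 c \geq 0$, the entire Fisher-information dissipation is absorbed (indeed discarded), and we obtain
\[
\ddt F(t) \leq 2c_1 \|u_{tx}\|_{\lab{2}}^2 + 2c_2\iab (\theta^{\sfrac12})_x^2 dx \leq C_2\, F(t),
\]
with $C_2$ depending only on $a,b,\mu$ and the energy bounds \eqref{balance} (hence on $\|u_0\|_{\habo}$, $\|v_0\|_{\lab{2}}$ and $\|\theta_0\|_{\lab{1}}$).

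Gronwall's lemma then yields $F(t)\leq F(0)\,e^{C_2 t}$, and since $F(0)$ is controlled by the stated initial data norms $\|\theta_0^{\sfrac12}\|_{\hab}$, $\|v_0\|_{\habo}$ and $\|u_0\|_{H^2(a,b)}$, the theorem follows with $C_1$ a multiple of $F(0)$. The only delicate point — and the essential reason why the whole approach works — is the simultaneous absorption of both the $\epsilon$ term directly produced by Proposition \ref{estprop1} and the $\epsilon_1 c$ term emerging from Proposition \ref{estprop2}; this is what makes it crucial that the \emph{same} Fisher dissipation $\int\theta[(\log\theta)_{xx}]^2 dx$ appears on the left in Lemma \ref{estlem} and can be fed, via Lemma \ref{cfhslem}, into both right-hand side remainders.
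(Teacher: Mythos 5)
Your proposal is correct and follows essentially the same route as the paper: plug Proposition \ref{estprop1} into the identity of Lemma \ref{estlem}, absorb the quartic term via Proposition \ref{estprop2} into the Fisher dissipation by choosing $\epsilon$ and $\epsilon_1$ small, discard the remaining dissipation, and conclude by Gronwall. The only cosmetic difference is your explicit smallness condition $(1-\epsilon)-\epsilon_1 c\geq 0$ versus the paper's choice $\epsilon<\tfrac12$, $\epsilon_1 c_3\leq\tfrac14$, which is immaterial.
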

\begin{proof}
Let us take $0<\epsilon<\frac12$ in Proposition \ref{estprop1}. Then, plugging (\ref{thprop1}) in the inequality in Lemma \ref{estlem}, we obtain
\begin{align*}
\frac12\ddt&\left(\int_a^b\frac{\theta_x^2}{\theta}dx+\int_a^bu_{tx}^2dx+\int_a^bu_{xx}^2dx\right)
\\
&\leq-\frac12\int_a^b\theta\left[(\log\theta)_{xx}\right]^2dx+c_1\iab\left(\theta^{\sfrac12}\right)_x^2dx\\
&+ c_2\|u_{tx}\|_{\lab{2}}^2+\epsilon_1\left(\iab\left(\theta^{\sfrac12}\right)_x^2dx\right)^2.
\end{align*}
Now, by Proposition \ref{estprop2} and choosing $\epsilon_1>0$ in Proposition \ref{estprop1} such that $\epsilon_1c_3\leq\frac14$, we obtain
\begin{align*}
\frac12\ddt&\left(\int_a^b\frac{\theta_x^2}{\theta}dx+\int_a^bu_{tx}^2dx+\int_a^bu_{xx}^2dx\right)\\
&\leq-\frac14\int_a^b\theta\left[(\log\theta)_{xx}\right]^2dx+c_1\iab\left(\theta^{\sfrac12}\right)_x^2dx+ c_2\|u_{tx}\|_{\lab{2}}^2\\
&\leq c\left(\iab\left(\theta^{\sfrac12}\right)_x^2dx+\|u_{tx}\|_{\lab{2}}^2\right).
\end{align*}
Finally, the Gronwall inequality gives the claim.
\end{proof}
\section{Existence of solutions}\label{sec4}
The present section is devoted to the rigorous construction of a global-in-time regular solution to \eqref{eq}.
To this end the a priori estimate obtained in Theorem \ref{aprioritw} is used. However, a delicate construction of approximate solutions is required. In principle, there are many options for constructing solutions. It seems to us that the half-Galerkin procedure chosen by us leads to the final goal at the lowest price.

\subsection{Approximate problem}\label{approx}

In the wave equation part of \eqref{eq} we shall use the Galerkin approximation. The heat equation will then be solved in a straight manner. The whole procedure follows the scheme proposed in \cite{TC2}. We introduce a smooth basis $\{\phi_i\}$ of $\habo$ and denote $V_n=\operatorname{span}\{\phi_i\}_{1\leq i\leq n}$.
\begin{defi}\label{apdef}
We say that $u_n\in W^{1,\infty}(0,T;V_n)$ and $\theta_n\in L^2(0,T;H^1(a,b))$ are solutions of an approximate problem if for all $\phi\in V_n$ and $\psi\in H^1(a,b)$ the following equations are satisfied in $\mathcal{D}'(0,T)$
\begin{align*}
\frac{d^2}{dt^2}\iab u_n\phi dx+\iab u_{n,x}\phi_x dx&=-\mu\iab\theta_n\phi_x dx,\\
\ddt\iab\theta_n\psi dx+\iab \theta_{n,x}\psi_xdx&=\mu\iab\theta_n u_{n,tx}\psi dx.
\end{align*}
Moreover, the following equalities
\begin{align*}
\theta_n(0)=\theta_{0},\quad u_n(0)=P_{V_n}u_0,\quad u_{n,t}(0)=P_{V_n}v_0
\end{align*}
are satisfied. Where $P_{V_n}$ is an orthogonal projection of the corresponding spaces onto $V_n$.
\end{defi}
By \cite[Proposition 1 and Lemma 4.2]{TC2} we have the following result
\begin{prop}\label{aproxex}
For every $n\in\mathbb{N}$ there exists a solution $(u_n,\theta_n)$ of the approximate problem in the sense of Definition \ref{apdef}. Moreover, one has $\theta_n\in L^2(0,T;H^2(a,b))\cap H^1(0,T;L^2(a,b))$ and there exist $\hat\theta_n>0$ such that $\theta_n(x,t)\geq\hat\theta_n$ for almost all $(t,x)\in [0,T]\times[a,b]$.
\end{prop}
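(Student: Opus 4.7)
The plan is to construct the approximate pair $(u_n,\theta_n)$ by a Schauder fixed-point argument on a short interval $[0,T^*]$ and then continue the resulting local solution to $[0,T]$ using a priori bounds. I would take the basis $\{\phi_i\}$ to be the $L^2$-normalized Dirichlet eigenfunctions of $-\partial_{xx}$ on $(a,b)$; this choice forces $\phi_i''\in V_n$ and is precisely the condition that keeps the test function $u_{n,txx}$, needed in the wave-equation half of Lemma~\ref{estlem}, admissible at the Galerkin level.

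The construction splits into two half-steps. Given $u_n$ in a closed ball of $W^{2,\infty}(0,T^*;V_n)$, the coefficient $f:=\mu u_{n,tx}$ is uniformly bounded (finite-dimensional target), so the second equation of \eqref{eq} becomes a linear parabolic problem
\[
\theta_{n,t}-\theta_{n,xx}=f\,\theta_n,\quad \theta_{n,x}(\cdot,a)=\theta_{n,x}(\cdot,b)=0,\quad \theta_n(0)=\theta_0,
\]
uniquely solvable in $L^2(0,T^*;H^2(a,b))\cap H^1(0,T^*;L^2(a,b))$ by standard parabolic theory. The weak maximum principle applied to $\theta_n\exp\bigl(-\int_0^t\|f(s,\cdot)\|_{L^\infty}\,ds\bigr)$ yields the quantitative lower bound $\theta_n\geq\hat\theta_n:=\tilde\theta\exp(-\|f\|_{L^\infty}T^*)>0$. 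Conversely, given such $\theta_n$, writing $u_n=\sum_{i=1}^n\alpha_i(t)\phi_i(x)$ reduces the Galerkin projection of the momentum equation to a linear second-order ODE for $(\alpha_i)$ with forcing $-\mu\iab\theta_n\phi_{i,x}\,dx$, uniquely solvable with $u_n\in W^{2,\infty}(0,T^*;V_n)$.

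Composing the two maps yields an operator $\mathcal{F}\colon u_n\mapsto\tilde u_n$ on a closed bounded subset of $W^{2,\infty}(0,T^*;V_n)$; its continuity follows from the linear parabolic estimates and its compactness from Aubin--Lions in the finite-dimensional target, so Schauder produces a fixed point on a sufficiently small $[0,T^*]$. The main obstacle is global extension, and here I would argue as follows. Because the heat-equation test function $\psi\in H^1(a,b)$ is left free in Definition~\ref{apdef}, the Fisher-information computation of Lemma~\ref{estlem} goes through verbatim after testing with $\theta_n^{-1/2}$-type multipliers; and because $u_{n,txx}$ lies in $V_n$ by the eigenbasis choice, the wave-equation identity obtained by testing with $u_{n,txx}$ likewise descends to the Galerkin level. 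Consequently the bound of Theorem~\ref{aprioritw} holds for $(u_n,\theta_n)$ with constants independent of $T^*$ (and of $n$, though that is not needed here), so the controlling norm cannot blow up on $[0,T]$ and a standard continuation argument prolongs the local solution to the full interval. The quantitative positivity $\theta_n\geq\hat\theta_n>0$ is preserved under continuation by re-applying the comparison estimate with the final-time $L^\infty$ bound on $u_{n,tx}$ supplied by Theorem~\ref{aprioritw}.
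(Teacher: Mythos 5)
The paper itself does not prove Proposition \ref{aproxex}: it is imported verbatim from \cite[Proposition 1 and Lemma 4.2]{TC2}, where a half-Galerkin decoupling of exactly the kind you describe is carried out. So your proposal is not in conflict with the paper; it supplies a self-contained construction, and its architecture (solve the heat equation exactly with frozen coefficient $\mu u_{n,tx}$, solve the projected wave equation with frozen $\theta_n$, close by a fixed point, obtain positivity by comparison, extend in time by a priori bounds) is sound and yields precisely the stated regularity $\theta_n\in L^2(0,T;H^2(a,b))\cap H^1(0,T;L^2(a,b))$ and the $n$-dependent lower bound $\hat\theta_n$.

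Three points should be tightened. First, the Schauder step: compactness cannot be claimed in the ball of $W^{2,\infty}(0,T^*;V_n)$ itself; either run the fixed point on a ball of $C^1([0,T^*];V_n)$ (Arzel\`a--Ascoli applies there, the image being bounded in $W^{2,\infty}$), or note that the composed map is locally Lipschitz with a constant that shrinks as $T^*\to0$, so the Banach fixed point theorem avoids compactness altogether. Second, the positivity argument: the weak maximum principle applied to $\theta_n\exp\bigl(-\int_0^t\|f(s,\cdot)\|_{L^\infty}ds\bigr)$ only gives nonnegativity; to obtain the quantitative bound you should compare with an explicit subsolution, i.e.\ check that $z=\theta_n-\tilde\theta\exp\bigl(-\int_0^t\|f(s,\cdot)\|_{L^\infty}ds\bigr)$ satisfies $z_t-z_{xx}-fz\ge0$ with $z(0)\ge0$ and Neumann conditions, hence $z\ge0$. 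Third, the continuation step is heavier than necessary and is where a genuine (though repairable) overreach occurs: for fixed $n$ you do not need Theorem \ref{aprioritw}, whose descent to the Galerkin level requires both the eigenbasis (so that $u_{n,txx}\in V_n$ is admissible) and the $C^{1,2}$ regularity of $\theta_n$ assumed in Lemma \ref{estlem}. The elementary energy identity of Proposition \ref{energy_cons}, which uses only the admissible test functions $u_{n,t}\in V_n$ and $\psi\equiv1$, bounds the Galerkin coefficients, and equivalence of norms on the finite-dimensional space $V_n$ then controls $\|u_{n,tx}\|_{L^\infty}$ with an $n$-dependent constant; linear parabolic theory then controls $\theta_n$ on all of $[0,T]$, which is all that continuation at fixed $n$ requires. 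The $n$-independent Fisher-information bounds are only needed later, in Theorem \ref{twosz}, and there your observation that the eigenbasis keeps $u_{n,txx}$ admissible is indeed the right justification for letting the estimate of Lemma \ref{estlem} descend to the approximations.
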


\subsection{Passing to the limit}
In this section, we show that the approximations defined in Section \ref{approx} converge to the solution of \eqref{eq} in the sense of Definition \ref{weakdef}.
\begin{tw}\label{twosz}
If $(u_n,\theta_n)$ is the solution of the approximate problem in the sense of Definition \ref{apdef}, then the following estimates (independent of $n$) are satisfied:
\begin{eqnarray}
&\|u_n\|_{W^{1,\infty}(0,T;\habo)}\leq C, \;\;\|u_n\|_{L^{\infty}(0,T;H^2(a,b))}\leq C,\label{pierwsze}\\ &\|u_{n,tt}\|_{L^\infty(0,T;\lab{2})}\leq C,\label{drugie}\\
&\|\theta_n\|_{L^{\infty}(0,T;\hab)}\leq C,\label{trzecie}\\
&\|\theta_{n,t}\|_{L^2(0,T;\lab{2})}\leq C,\label{czwarte}
\end{eqnarray}
where $C=C(a, b,\mu, T,\|\theta_0\|_{\hab}, \tilde\theta,\|u_0\|_{H^2(a,b)}, \|v_0\|_{\habo})$.
\end{tw}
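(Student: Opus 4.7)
The plan is to transfer the a priori estimate of Theorem \ref{aprioritw} to the approximate solutions and then to read off each of the four bounds in turn. The preparatory step is to fix the Galerkin basis $\{\phi_i\}$ as the eigenfunctions of $-\partial_{xx}$ on $\habo$, so that the space $V_n$ is invariant under $\partial_{xx}$. This is crucial because the derivation of Lemma \ref{estlem} multiplies the momentum equation by $u_{txx}$, and with the eigenbasis choice the test function $u_{n,txx}$ still lies in $V_n$, hence remains admissible in the Galerkin wave equation. With this preparation the chain of computations behind Lemma \ref{estlem} and Propositions \ref{estprop1}, \ref{estprop2} carries over to $(u_n,\theta_n)$: the division of the heat equation by $\theta_n^{\sfrac12}$ is legitimate since Proposition \ref{aproxex} provides $\theta_n\geq\hat\theta_n>0$ together with $\theta_n\in L^2(0,T;H^2(a,b))\cap H^1(0,T;L^2(a,b))$, and the Neumann condition $\theta_{n,x}(a)=\theta_{n,x}(b)=0$ justifies all the integrations by parts in $x$.

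Reproducing the Gronwall argument of Theorem \ref{aprioritw} at the approximate level then yields uniform bounds on $\iab(\theta_n^{\sfrac12})_x^2\,dx$, $\iab u_{n,tx}^2\,dx$ and $\iab u_{n,xx}^2\,dx$. Combined with the $H^2$-stability of $P_{V_n}$ afforded by the eigenbasis, Poincar\'e's inequality on $\habo$ and the identity $u_n(0)=P_{V_n}u_0$, this directly delivers (\ref{pierwsze}). To derive (\ref{trzecie}) I first note that the energy identity (\ref{balance}) also holds at the approximate level: testing the Galerkin wave equation against $u_{n,t}\in V_n$ and the heat equation against $\psi\equiv 1$ and adding the two gives precisely Proposition \ref{energy_cons} for $(u_n,\theta_n)$, so $\iab\theta_n\,dx$ is bounded uniformly. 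Together with the bound on $\iab(\theta_n^{\sfrac12})_x^2\,dx$, this places $\theta_n^{\sfrac12}$ in $L^\infty(0,T;\hab)$ uniformly and hence, by the 1D Sobolev embedding, in $L^\infty(0,T;L^\infty(a,b))$. The identity $\theta_{n,x}^2=4\theta_n(\theta_n^{\sfrac12})_x^2$ then upgrades this to a uniform $L^\infty(0,T;\hab)$ bound for $\theta_n$.

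The last two bounds are consequences of the equations themselves. For (\ref{drugie}), the Galerkin wave equation reads $u_{n,tt}=P_{V_n}(u_{n,xx}+\mu\theta_{n,x})$; since $P_{V_n}$ is an $L^2$-orthogonal projection, the $L^2$ norm of $u_{n,tt}$ is controlled by the $L^2$ norms of $u_{n,xx}$ and $\theta_{n,x}$, both already bounded. For (\ref{czwarte}) I test the strong form $\theta_{n,t}-\theta_{n,xx}=\mu\theta_n u_{n,tx}$ against $\theta_{n,t}$, integrate by parts in $x$ using the Neumann condition, and close the inequality via Cauchy--Schwarz together with the $L^\infty$ bound on $\theta_n$ and the $L^\infty(0,T;\lab{2})$ bound on $u_{n,tx}$. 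This yields
\begin{align*}
\iab\theta_{n,t}^2\,dx+\ddt\iab\theta_{n,x}^2\,dx\leq C\,\|\theta_n\|_{L^\infty(a,b)}^2\,\|u_{n,tx}\|_{\lab{2}}^2,
\end{align*}
and integrating in time produces (\ref{czwarte}).

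The hardest step is the very first one: verifying that the delicate nonlinear manipulation of Lemma \ref{estlem}, which simultaneously requires the test function $u_{txx}$ in the momentum equation and the division by $\theta^{\sfrac12}$ in the heat equation, survives the transition to the approximate problem. The eigenbasis choice handles the first difficulty and the strict positivity plus parabolic regularity of $\theta_n$ furnished by Proposition \ref{aproxex} handle the second; once this is settled, the remaining bounds are essentially routine applications of Gronwall's lemma, the 1D Sobolev embedding, and the a priori calculations of Section \ref{sec3}.
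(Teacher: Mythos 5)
Your proposal is correct and follows essentially the same route as the paper: transfer the Fisher-information estimate of Theorem \ref{aprioritw} together with the energy balance to the approximate solutions (the paper does this by asserting that the regular $(u_n,\theta_n)$ satisfy \eqref{eq} and applying Theorem \ref{aprioritw} directly, while you make the admissibility of the test function $u_{n,txx}$ explicit via the eigenbasis choice), then obtain \eqref{czwarte} by testing the heat equation with $\theta_{n,t}$ and \eqref{drugie} from the projected wave equation tested against $u_{n,tt}$. The only cosmetic difference is that the paper reads \eqref{trzecie} off the same $\theta_{n,t}$-test, whereas you obtain it directly from $\theta_{n,x}=2\theta_n^{\sfrac12}\left(\theta_n^{\sfrac12}\right)_x$ and the uniform $L^\infty$ bound on $\theta_n^{\sfrac12}$; both are fine.
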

\begin{proof}
We know that $u_n\in V_n$ and $V_n\subset C^{\infty}[a,b]$. Thus, we obtain $u_n(t)\in C^{\infty}[a,b]$ for almost all $t\in [0,T]$. On the other hand, by standard theory for hyperbolic equations (see \cite{evans}) we have $u_n\in C^{3,\infty}([0,T]\times[a,b])$. Similarly, by standard regularity theory of parabolic equations, we have $\theta_n\in C^{1,2}([0,T]\times[a,b])$ (see \cite[Chapter VII]{lady} or \cite[Chapter 5]{frid}). Therefore, identities in Definition \ref{apdef} are satisfied pointwise for all $t\in [0,T]$
\begin{align}
\iab u_{n,tt}\phi dx+\iab u_{n,x}\phi_x dx&=-\mu\iab\theta_n\phi_x dx,\label{apdef1}\\
\iab\theta_{n,t}\psi dx+\iab \theta_{n,x}\psi_xdx&=\mu\iab\theta_n u_{n,tx}\psi dx.\label{apdef2}
\end{align}
Moreover, our approximations $(u_n, \theta_n)$ are regular and they satisfy exactly \eqref{eq}, so that Theorem \ref{aprioritw} as well as Proposition \ref{energy_cons} can be applied directly to $(u_n, \theta_n)$ and yield
\begin{align}\label{minin1}
\|u_n\|_{W^{1,\infty}(0,T;\habo)}+\|u_n\|_{L^{\infty}(0,T;H^2(a,b))}+\|(\theta_n)^{\sfrac12}\|_{L^{\infty}(0,T;\hab)}\leq C.
\end{align}
The above gives \eqref{pierwsze}.

Next, let us take $\psi=\partial_t(\theta_n)$ in (\ref{apdef2}) and integrate the given equality over $[0,t]$ for an arbitrary $t\in (0,T]$. We obtain
\begin{align*}
\int_0^t\iab(\partial_t\theta_n)^2dxdt+\frac12\iab(\partial_x\theta_n)^2dx=\frac12\|(\partial_x\theta_{0})\|^2_{\lab{2}}+\mu\int_0^t\iab\theta_n(\partial_t\theta_n) (\partial^2_{tx}u_n)dxdt.
\end{align*}
It gives
\begin{align*}
\|\partial_t\theta_n\|_{L^2(0,T;\lab{2})}^2&+\frac12\|\partial_x\theta_n\|^2_{\lab{2}}\leq\frac12\|\partial_x(\theta_{0})\|^2_{\lab{2}}\\
&\quad+|\mu|\|\theta_n\|_{L^{\infty}(0,T;\lab{\infty})}\|\partial_t\theta_n\|_{L^2(0,T;\lab{2})}\|\partial_{tx}^2u_n\|_{L^2(0,T;\lab{2})}\\
&\leq\frac12\|\partial_x(\theta_{0})\|^2_{\lab{2}}+\frac12\|\partial_t\theta_n\|_{L^2(0,T;\lab{2})}^2+c\|\partial_{tx}^2u_n\|^2_{L^2(0,T;\lab{2})},
\end{align*}
where in the last inequality we used \eqref{minin1} and an embedding
\[
\|(\theta_n)^{\sfrac12}\|_{L^\infty\left((0,T);L^\infty(a,b)\right)}\leq C \|(\theta_n)^{\sfrac12}\|_{L^\infty\left((0,T);H^1(a,b)\right)},
\]
which yields an $n$ independent estimate of $\theta_n$ in $L^\infty$.

Hence, we have
\begin{align}\label{minin2}
&\|\partial_t\theta_n\|_{L^2(0,T;\lab{2})}^2+\|\partial_x\theta_n\|^2_{L^{\infty}(0,T;\lab{2})}\leq\nonumber \\ &\|\partial_x(\theta_{0})\|^2_{\lab{2}}+c\|\partial^2_{tx}u_n\|^2_{L^2(0,T;\lab{2})}\leq c,
\end{align}
where again we used \eqref{minin1} in the last inequality. Hence, both \eqref{trzecie} and \eqref{czwarte} are satisfied.

Finally, let us take $\psi=u_{n,tt}$ in (\ref{apdef1}). We obtain
\begin{align*}
\iab (u_{n,tt})^2 dx&=\iab u_{n,xx}u_{n,tt}dx+\mu\iab\theta_{n,x}u_{n,tt} dx\\
&\leq \frac12\iab (u_{n,tt})^2 dx+c\left(\|u_{n,xx}\|^2_{\lab{2}}+\|\theta_{n,x}\|^2_{\lab{2}}\right).
\end{align*}
Thus, in light of (\ref{minin1}) and (\ref{minin2}), we have
\begin{align*}
\|u_{n,tt}\|_{L^\infty(0,T;\lab{2})}\leq c.
\end{align*}

In the end let us note that the constant in \eqref{minin1} depends on $\|\theta_0^{\sfrac12}\|_{\hab}$. But we have the following inequality
\begin{align*}
\|\theta_0^{\sfrac12}\|_{\hab}\leq\left((b-a)^{\sfrac12}\|\theta_0\|_{\lab{2}}+\frac1{\tilde{\theta}}\int_a^b\theta_{0,x}^2dx\right)^{\sfrac12}.
\end{align*}
\end{proof}

Next, we proceed to prove the existence of a solution to \eqref{eq}, and hence the existence part of Theorem \ref{main_theorem}. Indeed, we have the following.
\begin{tw}
For any $T>0$, there exists a solution to problem (\ref{eq}) as described in Definition \ref{weakdef}.
\end{tw}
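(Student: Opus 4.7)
The plan is to pass to the limit along a subsequence of the Galerkin approximations $(u_n,\theta_n)$ from Proposition \ref{aproxex}, exploiting the $n$-independent bounds \eqref{pierwsze}--\eqref{czwarte} provided by Theorem \ref{twosz}. These bounds are tailored exactly to the function spaces in Definition \ref{weakdef}, so the strategy is to extract weak/weak-$*$ limits in the correct topologies and upgrade to strong convergence only where the nonlinearity demands it.

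By Banach--Alaoglu and a diagonal extraction I would obtain a subsequence (not relabeled) along which
\begin{align*}
u_n &\rightharpoonup^{*} u \text{ in } L^\infty(0,T;H^2(a,b)),\\
u_{n,t} &\rightharpoonup^{*} u_t \text{ in } L^\infty(0,T;\habo),\\
u_{n,tt} &\rightharpoonup^{*} u_{tt} \text{ in } L^\infty(0,T;\lab{2}),\\
\theta_n &\rightharpoonup^{*} \theta \text{ in } L^\infty(0,T;\hab),\\
\theta_{n,t} &\rightharpoonup \theta_t \text{ in } L^2(0,T;\lab{2}).
\end{align*}
These limits yield exactly the regularity stipulated in Definition \ref{weakdef}. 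For the momentum equation, fixing $\phi\in V_m$ and letting $n\to\infty$ in \eqref{apdef1} with $n\geq m$ is enough since the equation is linear in both $u$ and $\theta$; a density argument using that $\bigcup_m V_m$ is dense in $\habo$ extends the identity to all test functions, and the a.e.\ pointwise formulation $u_{tt}-u_{xx}=\mu\theta_x$ then follows by integration by parts since $u_{tt}$, $u_{xx}$, and $\theta_x$ all belong to $L^\infty(0,T;\lab{2})$.

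The delicate step is the nonlinear source $\mu\theta_n u_{n,tx}$ in the entropy equation. Aubin--Lions applied to $\theta_n$, which is bounded in $L^\infty(0,T;\hab)$ with $\theta_{n,t}$ bounded in $L^2(0,T;\lab{2})$, together with the compact embedding $\hab\hookrightarrow C([a,b])$ in one dimension, gives strong convergence $\theta_n\to\theta$ in $L^2(0,T;C([a,b]))$ and a.e.\ on $(0,T)\times(a,b)$. Combined with the weak-$*$ convergence of $u_{n,tx}$ in $L^\infty(0,T;\lab{2})$, this is enough to pass to the limit in $\int_0^T\eta(t)\iab\theta_n u_{n,tx}\psi\,dx\,dt$ for every $\psi\in C^\infty[a,b]$ and every $\eta\in C_c^\infty(0,T)$, which produces the entropy equation of Definition \ref{weakdef}. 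Positivity of the limit follows from the uniform a.e.\ bound $\theta_n(t,x)\geq\tilde\theta\exp\bigl(-|\mu|\int_0^t\|u_{n,tx}(s)\|_{L^\infty(a,b)}\,ds\bigr)$, obtained by the maximum principle for the linear parabolic equation satisfied by $\theta_n$ with given coefficient $\mu u_{n,tx}$, which is controlled uniformly in $n$ via \eqref{pierwsze} and the 1D embedding $\habo\hookrightarrow L^\infty(a,b)$.

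The continuity-in-time and initial-trace conditions reduce to standard embeddings: $u_t\in L^\infty(0,T;\habo)$ yields $u\in C([0,T];\habo)$, $u_{tt}\in L^\infty(0,T;\lab{2})$ yields $u_t\in C([0,T];\lab{2})$, and $\theta\in L^\infty(0,T;\hab)$ with $\theta_t\in L^2(0,T;\lab{2})$ gives $\theta\in C([0,T];\lab{2})$; the initial values are then identified using $P_{V_n}u_0\to u_0$ in $H^2(a,b)$, $P_{V_n}v_0\to v_0$ in $\habo$, and $\theta_n(0)=\theta_0$. The principal obstacle throughout is the one highlighted: obtaining enough compactness on $\theta_n$ to make sense of the product $\theta_n u_{n,tx}$ in the limit, since $u_{n,tx}$ is controlled only in a weak-$*$ topology. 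All remaining limit passages are linear and therefore automatic.
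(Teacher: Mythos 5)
Your existence argument is essentially the paper's proof: the same weak/weak-$*$ limits extracted from the bounds of Theorem \ref{twosz}, Aubin--Lions compactness on $\theta_n$ (bounded in $L^\infty(0,T;\hab)$ with $\theta_{n,t}$ bounded in $L^2(0,T;\lab{2})$) to get strong convergence and pass to the limit in the product $\theta_n u_{n,tx}$, linear passage in the momentum equation plus the standard time-continuity and initial-trace identifications. For the statement as posed (existence of a solution in the sense of Definition \ref{weakdef}, which does not itself demand positivity), this is complete and matches the paper.

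However, the positivity claim you append is not justified as written, and you should be aware it does not come for free. You invoke the lower bound $\theta_n(t,x)\geq\tilde\theta\exp\bigl(-|\mu|\int_0^t\|u_{n,tx}(s)\|_{L^\infty(a,b)}\,ds\bigr)$ and assert that $\|u_{n,tx}\|_{L^\infty(a,b)}$ is controlled uniformly via \eqref{pierwsze} and the embedding $\habo\hookrightarrow L^\infty(a,b)$. But \eqref{pierwsze} bounds $u_{n,t}$ in $L^\infty(0,T;\habo)$, hence $u_{n,t}$ (not $u_{n,tx}$) in $L^\infty$; the derivative $u_{n,tx}$ is only controlled in $L^\infty(0,T;\lab{2})$, which does not give the $L^1_tL^\infty_x$ bound the exponential requires. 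This is precisely why the paper proves positivity separately (Theorem \ref{positivity+}): one derives bounds on $\tau_n=\log\theta_n$ in $L^\infty(0,T;\hab)$ (Lemma \ref{logarytmy}), using the equation $\tau_{n,t}-\tau_{n,x}^2-\tau_{n,xx}=\mu u_{n,tx}$, where only $\|u_{n,tx}\|_{\lab{2}}$ is needed, and then concludes $\theta\geq e^{-\|\tau\|_{L^\infty(0,T;\lab{\infty})}}$ after passing to the limit. If you want positivity of the limit temperature, replace your maximum-principle bound by an argument of this type.
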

\begin{proof}
By Theorem \ref{twosz} there exist a subsequence of $(u_n,\theta_n)$ (still denoted as $(u_n,\theta_n)$) and
\begin{align*}
u&\in L^{\infty}(0,T;H^2(a,b))\cap W^{1,\infty}(0,T,\habo)\cap W^{2,\infty}(0,T;\lab{2}),\\
\theta&\in L^{\infty}(0,T;\hab)\cap H^1(0,T; \lab{2})
\end{align*}
such that the following weak convergences
\begin{align}\label{weakex}
\begin{split}
u_n&\stackrel *{\rightharpoonup}u\quad\ \;  \textrm{ in }L^{\infty}(0,T;H^2(a,b)),\\
u_{n,t}&\stackrel *{\rightharpoonup}u_t\quad\ \textrm{ in }L^{\infty}(0,T;\habo),\\
u_{n,tt}&\stackrel *{\rightharpoonup}u_{tt}\quad \textrm{ in }L^{\infty}(0,T;\lab{2}),\\
\theta_{n}&\stackrel*{\rightharpoonup}\theta\quad\ \ \textrm{ in }L^{\infty}(0,T;\hab),\\
\theta_{n,t}&\stackrel{}{\rightharpoonup}\theta_t\quad\ \textrm{ in }L^2(0,T;\lab{2})\end{split}
\end{align}
hold.

Next, by the Gelfand triple theorem (see for instance \cite{evans}), we notice that
\[
\theta\in C([0,T];L^2(a,b)),
\]
\[
u\in C([0,T];\habo)\;\;\mbox{and}\;\;u_t\in C([0,T];L^2(a,b)).
\]
In the standard way, we show that $u$ and $\theta$ satisfy the momentum equation in Definition \ref{weakdef}. Now, let us take $\psi\in C^{\infty}[a,b]$ and $\phi\in C^{\infty}[0,T]$. We multiply the second equation in Definition \ref{apdef} and integrate over $[0,T]$. It yields
\begin{align*}
\iot\iab\theta_{n,t}\psi\phi dxdt+\iot\iab \theta_{n,x}\psi_x\phi dxdt=\mu\iot\iab\theta_n u_{n,tx}\psi\phi dxdt.
\end{align*}
It is easy to see that
\begin{align*}
\iot\iab\theta_{n,t}\psi\phi dx dt+\iot\iab\theta_{n,x}\psi_x\phi dx dt 
\to \iot\iab\theta_t\psi\phi dx dt+\iot\iab\theta_x\psi_x\phi dx dt.
\end{align*}
It is left to show that
\begin{align}\label{conv}
\iot\iab\theta_n u_{n,tx}\psi\phi dx dt\to\iot\iab\theta u_{tx}\psi\phi dx dt.
\end{align}
The Aubin-Lions lemma (see \cite{aubin,rub}) and convergences from (\ref{weakex}) give that
\begin{align}\label{strcon}
 \theta_n\to\theta\quad\textrm{ in }L^{2}(0,T;\lab{2}).
 \end{align}
 Thus, we get that (\ref{conv}) holds.
\end{proof}
In the following remark, we emphasize that the obtained solution (besides having enough regularity to define them time globally) is bounded in $L^\infty$. 
\begin{rem}
Let us note that there exist constants $C_1$ and $C_2$ such that the following inequalities are satisfied
\begin{align*}
\|u(t,\cdot)\|_{\lab\infty}&\leq C_1e^{C_2t},\ \|u_t(t,\cdot)\|_{\lab\infty}\leq C_1e^{C_2t},\\
 &\quad\|\theta(t,\cdot)\|_{\lab\infty}\leq C_1e^{C_2t}.
\end{align*}
\end{rem}
\subsection{Positivity of the temperature}
The claim of Theorem \ref{main_theorem} is that $\theta$ is positive. The present section is devoted to the proof
of the latter.
\begin{tw}\label{positivity+}
Let $(u,\theta)$ be a solution of (\ref{eq}) in the meaning of Definition \ref{weakdef}. Then there exists $\hat\theta>0$ such that the temperature function satisfies
$$
\theta(t,x)\geq\hat\theta
$$
for almost $(t,x)\in[0,T]\times[a,b]$.
\end{tw}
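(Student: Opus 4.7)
The plan is to establish a uniform $L^\infty$-bound on $W_n:=(\log(\tilde\theta/\theta_n))_+$, which immediately translates into $\theta_n\ge\tilde\theta\,e^{-\|W_n\|_{L^\infty}}$. All manipulations are carried out on the approximations $(u_n,\theta_n)$ of Proposition \ref{aproxex}, for which $\theta_n>0$ and all formal computations are classical, and the uniform lower bound is transferred to $\theta$ via a.e.\ convergence of a subsequence, available thanks to \eqref{strcon}. A direct computation in the region $\{W_n>0\}=\{\theta_n<\tilde\theta\}$ yields
\[
W_{n,t}-W_{n,xx}=-W_{n,x}^2-\mu u_{n,tx}\le -\mu u_{n,tx},
\]
together with $W_n(0,\cdot)\equiv 0$ (because $\theta_0\ge\tilde\theta$) and Neumann boundary conditions inherited from those of $\theta_n$.

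Testing this inequality against $(W_n-k)_+$ for $k\ge0$ and integrating over $(a,b)$, the decisive step is the integration by parts
\[
-\mu\iab(W_n-k)_+\,u_{n,tx}\,dx=\mu\iab u_{n,t}\,[(W_n-k)_+]_x\,dx,
\]
valid because $u_{n,t}=0$ at $x=a,b$. Since $u_{n,t}\in L^\infty(0,T;\habo)$ uniformly in $n$ by Theorem \ref{twosz} and $\habo\hookrightarrow \lab{\infty}$ in one spatial dimension, the quantity $M:=\sup_n\|u_{n,t}\|_{L^\infty((0,T)\times(a,b))}$ is finite. Cauchy-Schwarz and Young's inequality then give
\[
\partial_t\|(W_n-k)_+\|_{\lab{2}}^2+\|[(W_n-k)_+]_x\|_{\lab{2}}^2\le \mu^2 M^2\,|\{x:W_n(t,x)>k\}|.
\]
Integrating in $t$ and setting $Z_n(k):=\sup_{t\in(0,T)}\|(W_n-k)_+\|_{\lab{2}}^2+\int_0^T\|[(W_n-k)_+]_x\|_{\lab{2}}^2\,dt$, one obtains $Z_n(k)\le C_0\,|\{W_n>k\}|$, where $|\cdot|$ denotes Lebesgue measure on the spacetime cylinder $(0,T)\times(a,b)$; in particular $Z_n(0)$ is bounded uniformly in $n$.

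A Stampacchia iteration then closes the argument. The one-dimensional parabolic Sobolev embedding $L^\infty(0,T;\lab{2})\cap L^2(0,T;\hab)\hookrightarrow L^6((0,T)\times(a,b))$ applied to $(W_n-k)_+$ gives $\|(W_n-k)_+\|_{L^6}^6\le C_1 Z_n(k)^3$, and since $(W_n-k)_+\ge h-k$ on $\{W_n>h\}$, for every $h>k\ge 0$
\[
|\{W_n>h\}|\le(h-k)^{-6}\|(W_n-k)_+\|_{L^6}^6\le C_1(h-k)^{-6}Z_n(k)^3,
\]
whence $Z_n(h)\le C_2(h-k)^{-6}Z_n(k)^3$. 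Since $Z_n(0)$ is uniformly bounded and the exponent $3>1$ satisfies the hypotheses of Stampacchia's lemma, there exists $K<\infty$ independent of $n$ with $Z_n(K)=0$, hence $W_n\le K$ a.e., i.e.\ $\theta_n\ge\tilde\theta e^{-K}=:\hat\theta>0$. Extracting an a.e.-convergent subsequence from $\{\theta_n\}$ then yields $\theta\ge\hat\theta$ a.e. The main obstacle is the critical regularity of the zeroth-order coefficient $\mu u_{n,tx}\in L^\infty_tL^2_x$: a naive maximum principle would produce an exponent $\int_0^T\|u_{n,tx}(s)\|_{\lab{\infty}}\,ds$ that is \emph{not} uniform in $n$; the argument is rescued by the integration by parts above, which transfers the $x$-derivative from $u_{n,tx}$ onto the test function, thereby exploiting the uniform $L^\infty$-bound on $u_{n,t}$ that one-dimensional Sobolev embedding provides for free.
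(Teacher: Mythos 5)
Your argument is correct, and it takes a genuinely different route from the paper. The paper works with $\tau_n=\log\theta_n$ and proves a \emph{two-sided} bound: it first gets $\|\tau_n\|_{L^\infty(0,T;L^1(a,b))}\le C$ by integrating the $\tau_n$-equation and subtracting it from the energy balance \eqref{balance} (the entropy-type functional of \cite{TC2}), then tests the $\tau_n$-equation with $\tau_{n,xx}$ and uses the uniform $L^\infty_t L^2_x$ bound on $u_{n,tx}$ from Theorem \ref{twosz} to conclude $\|\tau_n\|_{L^\infty(0,T;H^1(a,b))}\le C$; the embedding $H^1(a,b)\subset L^\infty(a,b)$ and a.e.\ convergence of $\tau_n$ and $\theta_n$ then give $\theta=e^\tau\ge e^{-\|\tau\|_{L^\infty}}$. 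You instead prove only the needed \emph{one-sided} bound by a De Giorgi--Stampacchia truncation of $\log(\tilde\theta/\theta_n)$: the quadratic term $-V_{n,x}^2$ has the favorable sign, the integration by parts moves the derivative off $u_{n,tx}$ so that only the uniform $L^\infty$ bound on $u_{n,t}$ (from $W^{1,\infty}(0,T;\habo)$ and the 1D embedding) is used, and the parabolic $L^6$ embedding plus Stampacchia's lemma yield a level $K$ independent of $n$, hence the explicit bound $\hat\theta=\tilde\theta e^{-K}$. Your approach avoids the entropy/energy-difference step and the $H^1$ estimate of $\log\theta_n$ altogether and gives a quantitative $\hat\theta$ depending only on $\tilde\theta,\mu,T,b-a$ and $\sup_n\|u_{n,t}\|_{L^\infty}$; the paper's approach is shorter given that the entropy identity was already at hand and additionally controls $\log\theta$ in $L^\infty_t H^1_x$.

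Two presentational caveats, neither a gap. First, $W_n$ is not twice differentiable across the contact set $\{\theta_n=\tilde\theta\}$, so the ``differential inequality on $\{W_n>0\}$'' should be implemented, as your computation in fact does, by testing the classical equation for $V_n=\log\tilde\theta-\log\theta_n$ (available since $\theta_n$ is smooth and strictly positive by Proposition \ref{aproxex}) with the admissible test function $(V_n-k)_+$, the boundary terms vanishing by the Neumann condition for $\theta_n$ and by $u_{n,t}\in\habo$. Second, the spacetime $L^6$ embedding on a bounded interval requires the full $H^1_x$ norm, which your $Z_n(k)$ controls through the sup-in-time $L^2$ term (with a constant depending on $T$), and Stampacchia's lemma applies since $Z_n$ is nonincreasing with exponent $\beta=3>1$. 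Finally, note that, exactly as in the paper's own proof, the estimate is established for the solution obtained as the limit of the approximations; for an arbitrary solution in the sense of Definition \ref{weakdef} one combines this with the uniqueness result of Section \ref{sec5}, so your scope matches the paper's.
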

The proof splits into two steps. The first is contained in the following lemma.
\begin{lem}\label{logarytmy}
If $\theta_n$ are the approximate solutions in the sense of Definition \ref{apdef}, then
there exists $C>0$ such that the following estimate is satisfied
\begin{align}
\|\log\theta_n\|_{\lab{\infty}(0,T;\hab)}&\leq C,\label{logar3}\\
\|\left(\log\theta_n\right)_t\|_{L^{\infty}(0,T;\left(\hab\right)^*)}&\leq C,\label{logar2}
\end{align}
where $C=C(a, b,\mu, T,\|\theta_0\|_{\hab}, \tilde\theta,\|u_0\|_{H^2(a,b)}, \|v_0\|_{\habo})$.
\end{lem}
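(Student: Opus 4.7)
The plan is to set $w_n:=\log\theta_n$, which is $C^{1,2}$ on $[0,T]\times[a,b]$ thanks to the pointwise lower bound $\theta_n\geq\hat\theta_n>0$ and the parabolic regularity furnished by Proposition \ref{aproxex}. Dividing the $\theta_n$-equation by $\theta_n$ and using the algebraic identity $\theta_{n,xx}/\theta_n=w_{n,xx}+w_{n,x}^2$ produces the PDE
\[
w_{n,t}-w_{n,xx}=w_{n,x}^2+\mu u_{n,tx},\qquad w_{n,x}(\cdot,a)=w_{n,x}(\cdot,b)=0.
\]
I would bound $\|w_{n,x}\|_{L^\infty(0,T;\lab{2})}$ first, then upgrade to the full estimate \eqref{logar3}, and finally read off \eqref{logar2} by testing the PDE against elements of $\hab$.

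The central estimate is the one on $w_{n,x}$. Multiplying the PDE by $-w_{n,xx}$ and integrating in $x$ gives
\[
\tfrac12\ddt\iab w_{n,x}^2\,dx+\iab w_{n,xx}^2\,dx=-\iab w_{n,xx}w_{n,x}^2\,dx-\mu\iab w_{n,xx}u_{n,tx}\,dx.
\]
The crucial identity is that the cubic term $\iab w_{n,xx}w_{n,x}^2\,dx=\tfrac13[w_{n,x}^3]_a^b=0$ by the Neumann condition, so it drops out. Applying Young on the remaining coupling to absorb half of $\iab w_{n,xx}^2\,dx$ leaves a forcing $\tfrac{\mu^2}{2}\|u_{n,tx}\|_{\lab{2}}^2$. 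Since Theorem \ref{twosz} delivers $\|u_{n,tx}\|_{L^\infty(0,T;\lab{2})}\leq C$ and the initial data give $\iab w_{n,x}^2(0)\,dx\leq\tilde\theta^{-2}\|\theta_{0,x}\|_{\lab{2}}^2$, integration in time yields the desired uniform bound.

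To promote this to \eqref{logar3}, I test the same PDE against the constant $1$: both $\iab w_{n,xx}\,dx$ (Neumann) and $\iab u_{n,tx}\,dx$ (because $u_n\in V_n\subset\habo$) vanish, leaving $\ddt\iab w_n\,dx=\iab w_{n,x}^2\,dx\geq0$. Hence $\iab w_n\,dx\geq\iab\log\theta_0\,dx\geq(b-a)\log\tilde\theta$ for every $t$. Combined with the uniform upper bound $w_n\leq\log\|\theta_n\|_\infty\leq C$ coming from Theorem \ref{twosz} via $\hab\hookrightarrow L^\infty(a,b)$, this gives $\|w_n\|_{L^1(a,b)}\leq C$. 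The 1D interpolation $\|f\|_\infty\leq|\bar f|+(b-a)^{1/2}\|f_x\|_{\lab{2}}$ (a consequence of the intermediate value theorem applied to $f-\bar f$) then yields $\|w_n\|_{L^\infty(0,T;\hab)}\leq C$.

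Finally, for \eqref{logar2}, I test the PDE against $\phi\in\hab$ and integrate by parts in the Laplacian term (Neumann kills boundary contributions). The three resulting terms are estimated by $\|w_{n,x}\|_{\lab{2}}\|\phi_x\|_{\lab{2}}$, $\|w_{n,x}\|_{\lab{2}}^2\|\phi\|_\infty$, and $|\mu|\|u_{n,tx}\|_{\lab{2}}\|\phi\|_{\lab{2}}$, each bounded by $C\|\phi\|_{\hab}$ using the uniform bounds already obtained together with $\hab\hookrightarrow L^\infty$. The main obstacle throughout is the gradient step: the Neumann cancellation $\iab w_{n,x}^2 w_{n,xx}\,dx=0$ is indispensable---without it the cubic nonlinearity would prevent closing the estimate, and every other step of the argument relies on that first uniform bound on $\|w_{n,x}\|_{\lab{2}}$.
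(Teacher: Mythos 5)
Your proposal is correct and follows essentially the same route as the paper: the same equation for $\log\theta_n$, the same gradient estimate obtained by testing with $(\log\theta_n)_{xx}$ where the cubic term $\iab (\log\theta_n)_x^2(\log\theta_n)_{xx}\,dx$ vanishes thanks to the Neumann condition, the bound $\|u_{n,tx}\|_{L^\infty(0,T;\lab{2})}\leq C$ from Theorem \ref{twosz}, and the same duality argument for \eqref{logar2}. The only divergence is the zeroth-order bound: the paper subtracts the space-integrated equation from the energy balance \eqref{balance} and exploits the positivity of $\theta_n-\log\theta_n$ to get $\|\log\theta_n\|_{L^\infty(0,T;L^1(a,b))}\leq C$, whereas you use the monotonicity of $\iab\log\theta_n\,dx$ (bounded below by $(b-a)\log\tilde\theta$) together with the uniform pointwise upper bound on $\theta_n$ from \eqref{trzecie} and $\hab\hookrightarrow L^\infty(a,b)$; both are valid and of comparable simplicity.
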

\begin{proof}
The proof makes use of the energy introduced in \cite{TC2}. From Proposition \ref{aproxex} we know that
\begin{align*}
\theta_n(t,x)>0
\end{align*}
for all $(t,x)\in[0,T]\times[a,b]$ and $n\in\mathbb{N}$. By (\ref{apdef2}) we  get that
\begin{align*}
\theta_{n,t}-\theta_{n,xx}=\mu\theta_nu_{n,tx}
\end{align*}
holds for all $(t,x)\in[0,T]\times[a,b]$. We divide the above identity by $\theta_n$ to arrive at
\begin{align}\label{taueq}
\tau_{n,t}-\tau^2_{n,x}-\tau_{n,xx}=\mu u_{n,tx},
\end{align}
where we denote $\tau_n:=\log\theta_n$.

We integrate \eqref{taueq} over $[a,b]$ and obtain
\begin{align*}
\iab\tau_{n,t}dx-\iab\tau_{n,x}^2dx=0.
\end{align*}
Now, we integrate the above equality over $[0,T]$. It yields
\begin{align}\label{posin1}
\iab\tau_ndx=\iab\tau_0dx+\iot\iab\tau_{n,x}^2dxdt,
\end{align}
where $\tau_0=\log\theta_0$.
Subtracting (\ref{posin1}) from (\ref{balance}) we observe
\begin{align*}
\frac12&\iab u_{n,t}^2dx+\frac12\iab u_{n,x}^2dx+\iab(\theta_n-\tau_n) dx+\iot\iab\tau_{n,x}^2dxdt\\
&=\frac12\iab v_{n,0}^2dx+\frac12\iab u_{n,0,x}^2dx+\iab(\theta_{0} -\tau_{0})dx
\end{align*}
for all $n\in\mathbb N$. It gives us
\begin{equation}\label{logar4}
\|\tau_n\|_{L^\infty(0,T;L^1(a,b))}\leq C.
\end{equation}

Next, let us multiply \eqref{taueq} by $\tau_{n,xx}$ and integrate over $[a,b]$. It yields
\begin{align*}
\iab\tau_{n,t}\tau_{n,xx}dx-\iab\tau^2_{n,x}\tau_{n,xx}dx-\iab\tau^2_{n,xx}dx=\mu\iab u_{n,tx}\tau_{n,xx}dx.
\end{align*}
After integration by parts we get
\begin{align*}
-\frac12\ddt\iab\tau_{n,x}^2dx-\iab\tau^2_{n,xx}dx=\mu\iab u_{n,tx}\tau_{n,xx}dx.
\end{align*}
This gives us
\begin{align*}
\frac12\ddt\|\tau_{n,x}\|^2_{\lab2}+\|\tau_{n,xx}\|_{\lab2}^2\leq c\|u_{n,tx}\|^2_{\lab2}+\frac12\|\tau_{n,xx}\|^2_{\lab2}.
\end{align*}
Thus, by Theorem \ref{twosz}, \eqref{pierwsze} we get
\begin{align*}
\ddt\|\tau_{n,x}\|^2_{\lab2}\leq C.
\end{align*}
Integrating over $[0,t]$ and recalling \eqref{logar4}, we obtain \eqref{logar3}.

Now, let us take $w\in \hab$ such that $\|w\|_{\hab}=1$. We multiply \eqref{taueq} by $w$ and integrate over $[a,b]$ to arrive at
\begin{align*}
\iab\tau_{n,t}wdx&=\iab\tau_{n,x}^2wdx+\iab\tau_{n,xx}wdx+\mu\iab u_{n,tx}wdx\\
& =\iab\tau_{n,x}^2wdx-\iab\tau_{n,x}w_xdx+\mu\iab u_{n,tx}wdx\\
&\leq \|\tau_n\|_{\hab}^2\|w\|_{\lab\infty}+\|\tau_{n,x}\|_{\lab2}\|w_x\|_{\lab2}+|\mu|\|u_{tx}\|_{\lab2}\|w\|_{\lab2}\\
&\leq C\|\tau_n\|_{\hab}^2+\|\tau_{n,x}\|_{\lab2}+|\mu|\|u_{tx}\|_{\lab2}.
\end{align*}
This, thanks to Theorem \ref{twosz} and \eqref{logar3}, gives us \eqref{logar2}.
\end{proof}

Now, we go back to the proof of Theorem \ref{positivity+}

\begin{proof}
We still denote $\tau_n=\log\theta_n$. By Lemma \ref{logarytmy}
$\tau_n$ is bounded in $L^\infty(0,T;\hab)$ and $\tau_{n,t}$ is bounded in $L^{\infty}(0,T;(\hab)^*)$. Thus, by Aubin--Lions lemma we get that there exists a subsequence of $\tau_n$ and $\tau\in L^2(0,T;\lab2)$ such that
\begin{align*}
\tau_n\to\tau\quad\textrm{ in }L^2(0,T;\lab2).
\end{align*}
It yields that (possibly on subsequence)
\begin{align}\label{aetau}
\tau_n(t,x)\to\tau(t,x)
\end{align}
for almost all $(t,x)\in [0,T]\times[a,b]$.
On the other hand, (\ref{strcon}) gives us that
\begin{align}\label{aetheta}
\theta_n(t,x)\to\theta(t,x)
\end{align}
for almost all $(t,x)\in [0,T]\times[a,b]$. Because $\theta_n=e^{\tau_n}$, thanks to \eqref{aetau} and \eqref{aetheta}, we get
\begin{align*}
e^{\tau}=\theta.
\end{align*}

Now, by \eqref{logar3} we obtain
\begin{align*}
\tau_{n}&\stackrel*{\rightharpoonup}\tau\quad \textrm{ in }L^{\infty}(0,T;\hab).
\end{align*}
Therefore, we have that $\tau\in L^{\infty}(0,T;\hab)$. It eventually yields that $\tau\in L^{\infty}(0,T;\lab\infty)$. This implies that
\begin{align*}
e^{-\|\tau\|_{L^{\infty}(0,T;\lab\infty)}}\leq \theta.
\end{align*}
\end{proof}

\section{Uniqueness of solutions}\label{sec5}

In this section, the uniqueness claim of Theorem \ref{main_theorem} is achieved.
We prove the following result.
\begin{tw}
The solution satisfying (\ref{eq}) in the sense of Definition \ref{weakdef} is unique.
\end{tw}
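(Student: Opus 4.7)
Let $(u_1,\theta_1)$ and $(u_2,\theta_2)$ be two solutions of \eqref{eq} with the same initial data, both in the sense of Definition \ref{weakdef}, and set $U := u_1 - u_2$, $\Theta := \theta_1 - \theta_2$. Subtracting the two momentum equations and the two weak entropy identities, the differences satisfy, for every $\psi\in C^\infty[a,b]$,
\begin{align*}
U_{tt}-U_{xx} &= \mu\Theta_x \quad\textrm{a.e.,}\\
\iab \Theta_t\psi\,dx+\iab\Theta_x\psi_x\,dx &= \mu\iab(\theta_1 U_{tx}+\Theta\, u_{2,tx})\psi\,dx,
\end{align*}
with $U(0)=U_t(0)=\Theta(0)=0$ and homogeneous Dirichlet data for $U$. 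My goal is to show that
$$
E(t) := \tfrac12\iab (U_t^2+U_x^2+\Theta^2)\,dx
$$
vanishes identically on $[0,T]$ via a Gronwall argument, which will force $(u_1,\theta_1)=(u_2,\theta_2)$.

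The first step is to test the momentum equation with $U_t$ and the entropy identity with $\psi=\Theta$ (admissible by density since $\Theta(t)\in\hab$ for a.e.~$t$). Time-differentiating the $L^2$ norms is justified by the standard Gelfand triple identity, as used in the existence proof, while the $x$-integrations by parts produce no boundary contributions because $U_t(a)=U_t(b)=0$. Adding the two resulting identities gives
$$
\frac{dE}{dt}+\|\Theta_x\|_{\lab{2}}^2
=\mu\iab U_t\Theta_x\,dx+\mu\iab\theta_1\Theta U_{tx}\,dx+\mu\iab u_{2,tx}\Theta^2\,dx.
$$
The main obstacle is the middle term on the right, since $U_{tx}=u_{1,tx}-u_{2,tx}$ is only controlled in $L^\infty(0,T;\lab{2})$ and no pointwise bound is available. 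The crucial manoeuvre is to shift the $x$-derivative by integration by parts, once more using $U_t(a)=U_t(b)=0$, to rewrite
$$
\mu\iab\theta_1\Theta U_{tx}\,dx = -\mu\iab\theta_{1,x}\Theta U_t\,dx-\mu\iab\theta_1\Theta_x U_t\,dx.
$$

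To close the estimate I would invoke the bounds of Theorem \ref{twosz} which, combined with the 1D embedding $\hab\hookrightarrow\lab{\infty}$, yield $\|\theta_1\|_{L^\infty(0,T;\lab{\infty})}+\|\theta_{1,x}\|_{L^\infty(0,T;\lab{2})}+\|u_{2,tx}\|_{L^\infty(0,T;\lab{2})}\leq C$, together with the Sobolev inequality $\|\Theta\|_{\lab{\infty}}\leq c(\|\Theta\|_{\lab{2}}+\|\Theta_x\|_{\lab{2}})$ applied whenever $\Theta$ appears in $L^\infty_x$. Each of the four resulting integrals can then be dominated by $\tfrac16\|\Theta_x\|_{\lab{2}}^2 + C E(t)$ via Young's inequality; a representative case is
$$
\Big|\mu\iab\theta_{1,x}\Theta U_t\,dx\Big|
\leq |\mu|\,\|\theta_{1,x}\|_{\lab{2}}\bigl(\|\Theta\|_{\lab{2}}+\|\Theta_x\|_{\lab{2}}\bigr)\|U_t\|_{\lab{2}}
\leq \tfrac{1}{6}\|\Theta_x\|_{\lab{2}}^2+C E(t).
$$
Summing these bounds, the dissipation $\|\Theta_x\|_{\lab{2}}^2$ on the left absorbs the $\|\Theta_x\|_{\lab{2}}^2$ contributions on the right and leaves $\tfrac{dE}{dt}\leq C E$. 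Since $E(0)=0$, Gronwall's inequality forces $E\equiv 0$, and uniqueness follows.
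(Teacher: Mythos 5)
Your argument is correct and is essentially the paper's own proof: an $L^2$ energy estimate for the differences, testing the momentum equation with $U_t$ and the entropy identity with $\Theta$, with the key step being the integration by parts that moves the $x$-derivative off the uncontrolled $U_{tx}$ before applying the regularity class \eqref{uniq}, the embedding $\hab\hookrightarrow\lab{\infty}$, Young's inequality, absorption of $\|\Theta_x\|_{\lab{2}}^2$, and Gronwall. The only differences are cosmetic: you split $\theta_1u_{1,tx}-\theta_2u_{2,tx}=\theta_1U_{tx}+\Theta u_{2,tx}$ (the paper uses the symmetric splitting $\Theta u_{1,tx}+\theta_2U_{tx}$) and you bound the quadratic term $\mu\iab u_{2,tx}\Theta^2\,dx$ directly via $\|\Theta\|_{\lab\infty}$ instead of integrating it by parts as the paper does.
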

\begin{proof}
Let us assume that $(u_1,\theta_1)$ and $(u_2,\theta_2)$ are two solutions of (\ref{eq}) in the sense of Definition \ref{weakdef}. Let us also denote $u=u_1-u_2$ and $\theta=\theta_1-\theta_2$. Now, let us subtract the momentum equation for $(u_2,\theta_2)$ from the momentum equation for $(u_1,\theta_1)$. In addition, w multiply the obtained identity by $u_t$ and integrate over $[a,b]$. It yields
\begin{align*}
\iab u_{tt}u_t dx+\iab u_xu_{tx} dx=\mu\iab\theta_xu_t dx.
\end{align*}
Hence,
\begin{align}\label{uniin2}
\frac12\ddt\left(\|u_t\|_{\lab{2}}^2+\|u_x\|_{\lab{2}}^2\right)\leq\epsilon\|\theta_x\|_{\lab{2}}^2+c\|u_t\|_{\lab{2}}^2.
\end{align}

Similarly, we proceed with the entropy equation and test it by $\psi=\theta$, then
\begin{align*}
\iab\theta_t\theta dx&+\iab\theta_x^2 dx =\mu\iab\theta_1 u_{1,tx}\theta dx-\mu\iab\theta_2 u_{2,tx}\theta dx\\
&=\mu\iab\theta^2 u_{1,tx} dx+\mu\iab\theta_2 u_{tx}\theta dx.
\end{align*}
We integrate by parts the right-hand side of the above equality to obtain
\begin{align}\label{uniin1}
\frac12\ddt\|\theta\|_{\lab{2}}^2+\|\theta_x\|_{\lab{2}}^2&=-2\mu\iab\theta\theta_xu_{1,t}dx-\mu\iab u_t\theta_{2,x}\theta dx-\mu\iab u_t\theta_2\theta_x dx\nonumber \\
&:=I_1+I_2+I_3.
\end{align}
Next, we estimate all the terms $I_1,I_2,I_3$ in a similar manner, using the one-dimensional embeddings.
\begin{align*}
I_1\leq 2\mu\|u_{1,t}\|_{\lab{\infty}}\|\theta\|_{\lab{2}}\|\theta_x\|_{\lab{2}}\leq c(\epsilon)\|\theta\|_{\lab{2}}^2+\epsilon\|\theta_x\|_{\lab{2}}^2,
\end{align*}
where in the last inequality we used the embedding $H^1\subset L^\infty$ and \eqref{weakex}.

With $I_2$ we proceed as follows
\begin{align*}
I_2&\leq \mu\|u_t\theta\|_{\lab{2}}\|\theta_{2,x}\|_{\lab{2}}\leq \mu\|\theta\|_{\lab{\infty}}\|u_t\|_{\lab{2}}\|\theta_{2,x}\|_{\lab{2}}\\
&\leq c\|\theta\|_{\hab}\|u_t\|_{\lab{2}}
\leq \epsilon\|\theta_x\|_{\lab{2}}^2+\epsilon\|\theta\|_{\lab2}^2+C(\epsilon)\|u_t\|^2_{\lab{2}},
\end{align*}
where in the third inequality we used the fact that $\theta_2$ satisfies \eqref{uniq}.
Next, we estimate $I_3$
\begin{align*}
I_3\leq\mu\|\theta_2\|_{\lab{\infty}}\|u_t\|_{\lab2}\|\theta_x\|_{\lab2}\leq \epsilon\|\theta_x\|^2_{\lab2}+c(\epsilon)\|u_t\|_{\lab2}^2,
\end{align*}
again we used the fact that $\theta_2$ satisfies \eqref{uniq} and embedding $H^1(a,b)\subset L^\infty(a,b)$.

Next, we plug in the above estimates of $I_1, I_2$ and $I_3$ in  (\ref{uniin1}) and obtain
\begin{equation}\label{prawie}
\frac12\ddt\|\theta\|_{\lab{2}}^2+\|\theta_x\|_{\lab{2}}^2\leq3\epsilon\|\theta_x\|_{\lab2}^2+c(\epsilon)\left(\|\theta\|_{\lab2}^2+\|u_t\|_{\lab2}^2\right).
\end{equation}
Adding \eqref{prawie} and (\ref{uniin2}) we arrive at
\begin{align*}
\frac12\ddt\left(\|u_t\|_{\lab{2}}^2+\|u_x\|_{\lab{2}}^2+\|\theta\|_{\lab2}^2\right)+\|\theta_x\|_{\lab2}^2\\
\leq 4\epsilon\|\theta_x\|_{\lab2}^2+c(\epsilon)\left(\|\theta\|_{\lab2}^2+\|u_t\|_{\lab2}^2\right).
\end{align*}
Now, let us take $\epsilon=\frac18$, then
\begin{align*}
\frac12\ddt\left(\|u_t\|_{\lab{2}}^2+\|u_x\|_{\lab{2}}^2+\|\theta\|_{\lab2}^2\right)\leq c\left(\|\theta\|_{\lab2}^2+\|u_t\|_{\lab2}^2\right),
\end{align*}
and by the Gronwall inequality $\theta=u_x=u_t=0$ for almost all $t\in [0,T]$. Hence, we immediately get $u_1=u_2$ and $\theta_1=\theta_2$.
\end{proof}

\section{Conclusions}\label{sec6}
In our article, we have proved the existence of a global-in-time regular unique solution to the minimal nonlinear
thermoelasticity problem \eqref{eq}. Our solution is based on a new estimate, described in Section \ref{sec3}.
The interesting problem for future research is the possibility of extending our estimates to the more general,
thermodynamically relevant system \eqref{general}. Since Fisher's information has a deep thermodynamical meaning, our approach seems to have a chance to be successful also in more general thermodynamically relevant problems. In particular, we hope that an extension of our Fisher's entropy estimates coupled with a proper generalization of an equality \cite{CFHS} would yield global-in-time solution to \eqref{general} for some choices of $\alpha,\beta,\gamma,\delta$. We emphasize that our approach cannot work for general $\alpha,\beta,\gamma,\delta$ due to the singularity occurrence in \cite{daf_hsiao}. Finding sufficient conditions on $\alpha,\beta,\gamma,\delta$ guaranteeing global solutions is an issue. 

Next, \eqref{eq} can be treated as a linearization of \eqref{general}, like in \cite{racke_shibata}, and this way yield essential information on more general models described by \eqref{general}.  As we mentioned above, one could
hope to distinguish between models guaranteeing global existence from those allowing singularities, the first step towards a criterion would involve looking for functionals similar to the one in Lemma \ref{estlem}. The present paper needs to be treated as a first step of a long-term programme.

The next issue is a problem with a global-in-time unique solution in higher dimensions. Let us recall the minimal version
of \eqref{eq} in physically relevant dimensions $2$ and $3$. This time temperature is a real-valued function $\theta$
defined on a bounded subset $\Omega$. Displacement is a vector field $u\colon\Omega\to\mathbb R^n$, $n=2,3$. Temperature satisfies zero Neumann boundary condition, while displacement satisfies zero Dirichlet boundary condition. Moreover, they satisfy
\begin{equation}\label{eq_dim}
\begin{cases}
u_{tt}-\Delta u=\mu\nabla \theta,&\textrm{in }(0,T)\times \Omega,\\
\theta_t-\theta_{xx}=\mu\theta \operatorname{div} u_t,&\textrm{in }(0,T)\times \Omega,\\
u(0,\cdot)=u_0,\ u_t(0,\cdot)=v_0,\ \theta(0,\cdot)=\theta_0>0.
\end{cases}
\end{equation}
In \cite{TC2} the global-in-time measure weak solution with a defect measure is obtained in 3D (the same construction
yields also a solution in 2D). Such a solution satisfies a weak-strong uniqueness. Moreover, $\theta$
is a positive measure. The question is whether finite-time singularities occur or not? Is there a difference
between dimension $2$ and $3$? It seems that a 2D case offers more structure. Notice that the vector field $u$ can be handled easier in 2D in view of the Helmholtz decomposition. On the other hand, the 2D formulation of \eqref{eq_dim} as a hyperbolic system seems also easier to deal with in 2D. The latter should be helpful in obtaining either the global existence results or the blowup ones. 

These are again interesting problems. Notice that our identity in Lemma \ref{estlem} can be extended to the higher-dimensional cases. We state it below. Notice that in 2D it seems to lead to the border case estimates.

We have the following formal estimate (we assume that solutions are regular).
\begin{prop}
Solution $(u, \theta)$ to \eqref{eq_dim} satisfies
\begin{align*}
\frac{1}{2}\frac{d}{dt}&\left(\int_\Omega \frac{|\nabla \theta|^2}{\theta}dx+\int_\Omega|\operatorname{div} (u_t)|^2dx+\int_\Omega|\nabla \operatorname{div} u|^2dx\right)=\\
&-\int_\Omega \theta\left|D^2 \log \theta\right|^2dx+\frac{\mu}{2}\int_\Omega \frac{|\nabla \theta|^2}{\theta}\operatorname{div}(u_t)dx.
\end{align*}
\end{prop}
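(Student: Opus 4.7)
The plan is to mirror the 1D derivation of Lemma \ref{estlem} in higher dimensions. Setting $w:=\theta^{1/2}$, dividing the temperature equation in \eqref{eq_dim} by $2w$ yields the pointwise identity
\[
w_t=\Delta w+\frac{|\nabla w|^2}{w}+\frac{\mu}{2}w\operatorname{div}(u_t).
\]
Because $|\nabla\theta|^2/\theta=4|\nabla w|^2$, the strategy is to differentiate $\int_\Omega|\nabla w|^2\,dx$ along the flow, combine the result with a divergence-test of the momentum equation, and then reshape the dissipation into $\int_\Omega\theta|D^2\log\theta|^2\,dx$.

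For the heat part, I would write
\[
\tfrac{1}{2}\ddt\int_\Omega\frac{|\nabla\theta|^2}{\theta}\,dx=4\int_\Omega\nabla w\cdot\nabla w_t\,dx,
\]
substitute $w_t$, and process the three resulting pieces. The linear term $4\int\nabla w\cdot\nabla\Delta w\,dx$ integrates by parts to $-4\int|D^2 w|^2\,dx$. The nonlinear term, expanded via $\nabla(|\nabla w|^2/w)=2D^2w\,\nabla w/w-|\nabla w|^2\nabla w/w^2$, contributes $8\int(\nabla w\cdot D^2w\,\nabla w)/w\,dx-4\int|\nabla w|^4/w^2\,dx$. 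The forcing term, after one integration by parts and the pointwise identity $w\Delta w=\tfrac{1}{2}\Delta\theta-|\nabla\theta|^2/(4\theta)$, contributes $-\mu\int\Delta\theta\,\operatorname{div}(u_t)\,dx+\tfrac{\mu}{2}\int(|\nabla\theta|^2/\theta)\operatorname{div}(u_t)\,dx$. The three dissipative pieces then combine into $-\int\theta|D^2\log\theta|^2\,dx$: direct expansion of $\partial_{ij}\log\theta=2(\partial_{ij}w)/w-2(\partial_i w)(\partial_j w)/w^2$ gives
\[
\theta|D^2\log\theta|^2=4\bigl[|D^2 w|^2-2(\nabla w\cdot D^2w\,\nabla w)/w+|\nabla w|^4/w^2\bigr].
\]

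For the wave-equation contribution, which plays the role of testing by $u_{txx}$ in the 1D proof, I would take the divergence of the momentum equation in \eqref{eq_dim} to obtain $(\operatorname{div}u)_{tt}-\Delta\operatorname{div}(u)=\mu\Delta\theta$, multiply by $\operatorname{div}(u_t)$, and integrate by parts to get
\[
\tfrac{1}{2}\ddt\int_\Omega\bigl(|\operatorname{div}(u_t)|^2+|\nabla\operatorname{div}(u)|^2\bigr)\,dx=\mu\int_\Omega\Delta\theta\,\operatorname{div}(u_t)\,dx.
\]
Adding this to the heat-part identity cancels the two $\mu\int\Delta\theta\,\operatorname{div}(u_t)\,dx$ cross-terms and produces the asserted equality.

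The main obstacle is hidden in the integrations by parts. In 1D one has $(w_{xx})^2=|D^2 w|^2$ pointwise, so that $\int\nabla w\cdot\nabla\Delta w\,dx=-\int|D^2 w|^2\,dx$ is immediate; in higher dimensions this identity holds only up to a Reilly-type boundary correction $\tfrac{1}{2}\int_{\partial\Omega}\partial_\nu|\nabla w|^2\,dS$, which is not controlled by the Neumann condition $\partial_\nu\theta=0$ alone. Similarly, the wave step requires $\int_{\partial\Omega}\operatorname{div}(u_t)\,\partial_\nu\operatorname{div}(u)\,dS=0$, which does not follow from the Dirichlet condition $u|_{\partial\Omega}=0$. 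Since the proposition is stated as a formal identity, the plan is to discard these boundary contributions at the formal level; turning the identity into a rigorous estimate on a bounded domain would require additional boundary regularity or geometric hypotheses on $\partial\Omega$, and is the natural follow-up step.
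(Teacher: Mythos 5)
Your proposal is correct and follows essentially the same route as the paper's sketch: the higher-dimensional Fisher-information identity for the heat equation (via $w=\theta^{1/2}$) plus testing the momentum equation with $\nabla\operatorname{div}(u_t)$ (your divergence-then-test variant is the same computation up to one integration by parts), with the $\mu\int_\Omega\Delta\theta\,\operatorname{div}(u_t)\,dx$ terms cancelling upon addition. Your explicit flagging of the neglected boundary contributions is consistent with the paper, which states the proposition only as a formal identity.
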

\begin{proof}
The proof is analogous to the proof of Lemma \ref{estlem}, so we only sketch it. First, the higher-dimensional Fisher information argument gives
\[
\frac12\frac{d}{dt}\int_\Omega \frac{|\nabla \theta|^2}{\theta}dx=-\int_\Omega \theta|D^2 \log \theta|^2dx-\mu\int_\Omega \Delta \theta \operatorname{div}(u_t)dx+\frac{\mu}{2}\int_\Omega \frac{|\nabla\theta|^2}{\theta}\operatorname{div}(u_t)dx.
\]
Next, multiplying the wave equation in \eqref{eq_dim} by $\nabla \operatorname{div}(u_t)$, we arrive at
\[
\frac{d}{dt}\left(\frac12\int_\Omega |\operatorname{div}(u_t)|^2dx+\frac12 \int_\Omega |\nabla \operatorname{div}u|^2dx\right)=\mu\int_\Omega \Delta \theta \operatorname{div}(u_t)dx.
\]
The claim follows.
\end{proof}

{\bf Acknowledgement.} T.C.\ was supported by the National Science Center of Poland grant SONATA BIS 7 number UMO-2017/26/E/ST1/00989.

\end{document}